\definecolor{britishracinggreen}{rgb}{0.0, 0.26, 0.15}
\newtheorem{theorem}{Theorem}[section]
\newtheorem{proposition}[theorem]{Proposition}
\newtheorem{lemma}[theorem]{Lemma}
\newtheorem{corollary}[theorem]{Corollary}
\newtheorem{remark}{Remark}[section]
\newtheorem{maintheorem}{Theorem}
\newcommand{\R}{\mathbb{R}}
\newcommand{\N}{\mathbb{N}}
\newcommand{\w}{\omega}
\DeclareMathAlphabet{\mathpzc}{OT1}{pzc}{m}{it}
\newcommand{\comment}[1]{}
\title[Genericity of trivial Lyapunov spectrum for $L^p$-cocycles derived from second order linear homogeneous differential equations]
{Genericity of trivial Lyapunov spectrum for $L^p$-cocycles derived from second order linear homogeneous differential equations}
\author[D. Amaro]{Dinis Amaro}
\thanks{$^\dagger$Corresponding author: bessa@ubi.pt.\\\emph{\textup{2020} Mathematics Subject Classification:} Primary: 34D08,  37H15,
	Secondary: 34A30, 37A20.}
\email{dinis.amaro@ubi.pt}
\author[M. Bessa]{M\'{a}rio Bessa$^\dagger$}
\email{bessa@ubi.pt}
\author[H. Vilarinho]{Helder Vilarinho\\ \\
\noindent \tiny{Centro de Matem\'atica e Aplica\c{c}\~oes (CMA-UBI)\\ Universidade da Beira
	Interior\\ Rua Marqu\^es d'\'Avila e Bolama, 6201-001, Covilh\~a, Portugal.}\\}
\email{helder@ubi.pt}
\keywords{Kinetic cocycles; Linear cocycles; Linear differential systems; Multiplicative ergodic theorem; Lyapunov exponents; Random dynamical systems.}
\date{\today}
\begin{document}
	
	\begin{abstract}
	Given an ergodic flow $\varphi^t\colon M\rightarrow M$ defined on a probability space $M$ we study a family of continuous-time \emph{kinetic} linear cocycles associated to the solutions of the second order linear homogeneous differential equations $\ddot x +\alpha(\varphi^t(\omega))\dot x+\beta(\varphi^t(\omega))x=0$, where the parameters $\alpha,\beta$ evolve along the $\varphi^t$-orbit of $\omega\in M$. Our main result states that for a generic subset of kinetic continuous-time linear cocycles, where \emph{generic} means a Baire second category with respect to an $L^p$-like topology on the infinitesimal generator, the Lyapunov spectrum is trivial.\\

	\end{abstract}
	
	\maketitle

	\section{Introduction}
	\par We know, since the first half of nineteenth-century and by Liouville's theorem, that serious restrictions are present when we try to apply analytic methods to integrate most functions. This result can be seen as a kind of \emph{differential Galois theory} and represent a deep obstacle in solving differential equations explicitly. The way we bypass this inevitable fact is twofold: in one hand powerful numerical methods were developed to approximate the solutions and on the other hand a qualitative theory of differential equations emerged from the pioneering works of Poincar\'e and Lyapunov. We will be interested in following the last mentioned approach.
	
	With the study that we carry out we intend to understand the asymptotic behavior of the solutions of the second order homogeneous linear differential equations with coefficients displaying $L^p$ regularity, varying in time and allowing an $L^p$-small perturbation. Namely, to describe its Lyapunov spectrum under $L^p$-generic conditions of its coefficients. Families of this type of equations will be considered indexed in a flow which keep invariant the probability measure in a measure space $M$. We have as motivation in the first instance a family of equations that describe the motion of the simple damped pendulum  free from external forces, of type
	\begin{equation}\label{dampedp}
		\ddot{x}(t) +\alpha(\varphi^t(\omega))\dot{x}(t)+\beta(\varphi^t(\omega))x(t)=0,
	\end{equation}
	where $\alpha$ and $\beta$ are functions depending on $\omega\in M$ evolving along a flow $\varphi^t\colon M\rightarrow M$ for $t\in\mathbb{R}$. Clearly, if $\alpha$ and $\beta$ are first integrals related with $\varphi^t$ (i.e. constant along the $\varphi^t$-orbits) the equation (\ref{dampedp}) is easily solved by elementary methods of a first course on differential equations. When the parameters vary in time, explicit solutions are hard to get. This is the case when the \emph{frictional force} $\alpha$ and the \emph{frequency of the oscillator} $\beta$ change over time.
	
	In \cite{Be2} the second author deal with a similar case but with periodic coefficients along periodic closed orbits and proved that small $C^0$-perturbations on the parameters allows us to obtain that asymptotic unstable solutions are precisely the uniformly hyperbolic saddle-type ones. In the present paper we intend to focus on a perturbative theory with a coarser topology, namely allowing perturbations in an $L^p$-type topology and with random (non-periodic) base dynamics.
	
	Differential equations like (\ref{dampedp}) are ubiquitous in physics, engineering, biology and numerous applications of mathematics like, e.g., solid state physics, structural stability, wave propagation in one-dimensions, stability of synchronous electrical machines, etc. Fixing \emph{position} and \emph{momentum} $(x(0),\dot x(0))$ we intend to study the asymptotic behavior when $t\rightarrow\infty$ of the pair $(x(t),\dot x(t))$, namely the asymptotic exponential growth rate given by the \emph{Lyapunov exponent}. The literature on the subject with more or less similar nuances is substantial (see \cite{ACE, AW, Be2, FL, Lei} and the references therein). The literature on the broader matter of Lyapunov exponents had a substantial grown in the last decade as it can be seen by several books published lately \cite{Ba,DK,Iz,Pi,V}.
	
	The $L^p$-generic point of view on quite general linear differential systems was studied in \cite{BVi} by two of the authors and after Arbieto-Bochi \cite{AB} (see also the references therein). In \cite{BVi} was proved that the class of \emph{accesible} (aka \emph{twisting}) linear differential systems, a wider class that includes cocycles that evolve in $\text{GL}(d,\mathbb{R}), \text{SL}(d,\mathbb{R})$ and $\text{Sp}(d,\mathbb{R})$, have a trivial Lyapunov spectrum $L^{p}$-genericaly. When considering the sharper $C^0$-norm it is known since Millionshchikov's work in the late sixties that the generic behaviour changes (see \cite{Mi}). A complete treatment on the $C^0$-case was done in \cite{Be, Be2} after the discrete approach done in \cite{B,BV2}. The question of knowing the $C^0$-generic asymptotic behaviour of linear differential systems arising from equations like (\ref{dampedp}) is a work in progress. There as been recently a growing interest in understanding the discrete `dynamical cocycle' from an $L^p$-perturbative viewpoint and $L^p$-generic properties \cite{AABT2, FHT0, FHT}. Since perturbing the cocycle given by the derivative depends on a perturbation of the map, these dynamical cocycles tend to drag several other difficulties.
	
	When we intend to change all the Lyapunov exponents in order them to become equal the naive idea is to distribute expansion/contraction rates equally for all directions. This can be made by rotating directions in a convenient way to have, at the end of the day, those tax rates identically scattered. Indeed, rotating in a systematic way, is a crucial idea which was developed in certain contexts in the Sovietic literature of the 1970s (see \cite{No}) and in the 1980s by Ma\~n\'e \cite{M1,M2}. Issues related with the continuity of Lyapunov exponents are much more complicated within feeble topologies like the $L^p$ one. Here, we dedicate a substantial effort understanding the $L^p$-continuous dependence on the Lyapunov exponents as our arguments are supported on this assumption. Moreover, and also related with continuity, we know that typically squeezing the distance between vector fields implies in a decrease of the distance between its flows trajectories on compact times. Here, and once again, the $L^p$ topology creates additional difficulties.
	
	In overall, the main result in the present paper (Theorem~\ref{ops2}) can be summarized in the following way: 
	
\smallskip
	{\small\emph{For an $L^p$-generic choice of a kinetic linear differential system (as in \eqref{dampedp}) and for almost every driving realization, no matter what position and momentum $(x(0),\dot x(0))$ we chose as initial conditions, the asymptotic exponential behaviour of the solutions will be the same.}}
	
	\smallskip
	
	This paper is organized as follows: in \S\,\,\,\ref{continuous results} we will present the basic definitions and state our main result (Theorem~\ref{ops2}); section \S\,\,\,\ref{pmt} is devoted to the perturbation framework; in section \S\,\,\,\ref{usc} we will deal with continuity issues of the Lyapunov exponents with respecto to an $L^p$ distance and, finally, in \S\,\,\,\ref{PT1} we prove Theorem~\ref{ops2}.

	\section{Definitions and main result}\label{continuous results}
	
	\subsection{Linear cocycles} In this section we present some definitions that will be useful for the development of this work. Let $(M,\mathcal{M},\mu)$ be a probability space and let $ \varphi\colon \R \times M \to M$ be a \emph{metric dynamical system} (or \emph{flow}) in the sense that  is a measurable map and
	\begin{enumerate}
		\item $\varphi^t \colon M \to M$ given by $\varphi^t (\w) = \varphi(t,\w)$ preserves the measure $\mu$ for all $t \in \R$;
		
		\item $\varphi^0 = \text{Id}_{M}$ and $\varphi^{t+s}=\varphi^t\circ\varphi^s$ for all $t,s \in\R$.
	\end{enumerate}
	 Unless stated otherwise we will consider in the sequel that the flow is ergodic in the usual sense that there exist no invariant sets except null sets and
their complements.

Let $\mathcal{B}(X)$ be the Borel $\sigma$-algebra of a topological space $X$. A (continuous-time) linear \emph{random dynamical system} (RDS) on $(\R^2,\mathcal B(\R^2))$, or a (continuous-time) \emph{linear cocycle}, over $\varphi$ is a $(\mathcal{B}(\R)\times \mathcal{M}/\mathcal{B}(\text{GL}(2, \mathbb{R}))$-measurable map 	
	\begin{equation*}
	\Phi:\R\times M \to \text{GL}(2, \mathbb{R})
	\end{equation*}
	such that the mappings $\Phi(t,\w)$ forms a cocycle over $\varphi$, i.e.,
		\begin{enumerate}
			\item $\Phi(0,\w)=\text{Id}$ for all $\w\in M$; 
			\item $\Phi(t+s,\w)=\Phi(t,{\varphi^{s}(\w)})\circ\Phi(s,\w)$, for all $s, t\in\R$ and $\w\in M$,
		\end{enumerate}
	and $t\mapsto \Phi(t,\w)$ is continuous for all $\w\in M$. We recall that having $\w\mapsto\Phi(t,\w)$ measurable for each $t\in\R$ and $t\mapsto \Phi(t,\w)$ continuous for all $\w\in M$ implies that $\Phi$ is measurable in the product measure space. These objects are also called \emph{linear differential systems} in the literature (see \S\,\,\,\ref{kinetic}).

	\subsection{Kinetic Linear Differential Systems}\label{kinetic}
	
	We begin by considering as motivation the non-autonomous linear differential equation, which describes a motion of the damped harmonic oscillator as the `simple pendulum' along the path $(\varphi^{t}(\omega))_{t\in\mathbb{R}}$, with $\w\in M$ described by the flow $\varphi$.
		Let $K\subset \mathbb{R}^{2\times2}$ be the set of matrices $2\times 2$ of type
		\begin{eqnarray*}
		A=\left(\begin{matrix} 0&1\\b&a\end{matrix}\right)
	\end{eqnarray*}
for real numbers $a, b$, and denote by $\mathcal{G}$ the set of measurable applications $A: M\rightarrow \mathbb{R}^{2\times2}$  and by $\mathcal{K}\subset \mathcal{G}$ the set of \emph {kinetic} measurable applications $A: M\rightarrow K$. As usual we identify two applications on $\mathcal G$ that coincide on a $\mu$ full measure subset of $M$. Consider measurable maps $\alpha\colon M \to \R$ and $\beta\colon M \to \R$.
		Take the random differential equation given by \eqref{dampedp}  and defined by:
	\begin{equation*}
		\ddot{x}(t) + \alpha(\varphi^{t}(\omega)) \dot{x}(t) + \beta(\varphi^{t}(\omega)) x(t) = 0.
	\end{equation*}
		Considering the change of variables $y(t)= \dot{x}(t)$ we may rewrite \eqref{dampedp} as the following vectorial linear system
	\begin{eqnarray}\label{E1}
		\dot{X}= A(\varphi^{t}(\omega))\cdot X,
	\end{eqnarray}
	where $X=X(t)=(x(t),y(t))^T=(x(t),\dot x(t))^T$ and $A\in\mathcal{K}$ is given by
	\begin{eqnarray*}
		A(\omega)=\left(\begin{matrix}0&1\\ -\beta(\omega)& -\alpha(\omega)\end{matrix}\right).
	\end{eqnarray*}
		It follows from \cite[Thm. 2.2.2]{A} (see also Lemma 2.2.5 and Example 2.2.8 in this reference) that if $A\in \mathcal{G}^1=: \mathcal{G}\cap L^1(\mu)$, i.e. $\int_M \|A\|\,d\mu<\infty$, generates a unique (up to indistinguishability)  linear RDS $\Phi_A$ satisfying
	\begin{equation}\label{eq:LDS}
		\Phi_{A}(t,\omega)=\text{Id}+\int_{0}^{t}A(\varphi^{s}(\omega))\Phi_{A}(s,\omega)\,ds.
	\end{equation}

		The solution $\Phi_A(t,\w)$ defined in (\ref{eq:LDS}) is called \emph{the Carath\'eodory solution} or \emph{weak solution}. Given an initial condition $X(0)=v\in\mathbb{R}^2$, we say that $t\mapsto \Phi_A(t,\w)v$ solves or is a solution of the Random Differential Equation (RDE)~\eqref{E1}, or that the RDE~\eqref{E1} generates $\Phi_{A}(t,\w)$. Note  that $\Phi_A(0,\w)v=v$ for all $\w\in M$ and $v\in\mathbb{R}^{2}$. If the solution \eqref{eq:LDS} is differentiable in time (i.e. with respect to $t$) and satisfies for all $t$
	\begin{equation}\label{eq:LDS2}
		\frac{d}{dt}\Phi_A(t,\w)v=A(\varphi^t(\w))\Phi_A(t,\w)v\,\,\,\,\,\,\,\,\,\,\text{and}\,\,\,\,\,\,\,\,\,\, \Phi_A(0,\w) v=v,
	\end{equation}
	then it is called a \emph{classical solution} of RDE~\eqref{E1}.
	
Of course that $t\mapsto \Phi_A(t,\w)v\,$ is continuous for all $\w$ and $v$. Due to \eqref{eq:LDS2} we call $A:M\to K$ a (kinetic) `infinitesimal generator' of $\Phi_A$. Sometimes, due to the relation between $A$ and $\Phi_A$, we refer to both $A$ and $\Phi_A$ as a kinetic linear cocyle/RDS/differential system. If the RDE \eqref{E1} has initial condition $X(0)=v$ then $\Phi_A(0,\w)v=v$ and $X(t)=\Phi_A(t,\w)v$.

		\bigskip

\subsection{The $L^p$ topology}\label{top}
	We begin by defining an $L^p$-like topology generated by a metric that compares the infinitesimal generators on $\mathcal G$.By a standard use of Gr\"onwall's inequality arguments it is usual to obtain the approximation of flows (on compact times) by assuming the approximation of the corresponding infinitesimal generators. However, $L^p$-estimates, $1\leq p <\infty$, are more demanding and it is not clear if such $L^p$-continuous dependence holds (cf. Lemma~\ref{contdep2}). Related to this problem see e.g. \cite{Felix} where these issues are treated for $L^1$-approximation on initial conditions in quasilinear elliptic-parabolic equations. With this in mind we define now the distance we are going to consider along this work. Given $1\leq p < \infty$ and $A,B\in\mathcal{G}$ we 
	set 
			\begin{equation*}
		\hat\sigma_p(A,B):=\left\{\begin{array}{lll} \displaystyle\left(\int_{M} \|A(\omega)-B(\omega)\|^p\,d\mu(\w)\right)^{\frac1p}, \\    \infty\,\,\,\text{if the above integral does not exists,}         \\    \end{array}\right.
	\end{equation*}
and define
	\begin{equation*}
		\sigma_p(A,B):=\left\{\begin{array}{lll}\frac{\hat\sigma_p(\Phi_A,\Phi_B)}{1+\hat\sigma_p(\Phi_A,\Phi_B)},\,&&\text{if}\,\, \hat\sigma_p(A,B)<\infty\\   1,\,&&\text{if}\,\, \hat\sigma_p(A,B)=\infty         \\    \end{array}\right..
	\end{equation*}
	Clearly, $\sigma_p$ is a distance in $\mathcal{G}$.

\begin{proposition}\label{complete}
Consider $1\leq p <\infty$. Then:
\begin{enumerate}
\item[(i)] $\sigma_p(A,B)\leq \sigma_q(A,B)$ for all $q\geq p$ and all $A,B\in\mathcal{G}$. 
\item[(ii)] If $A\in\mathcal{G}^1$ then for any $B\in\mathcal{G}$ satisfying $\sigma_p(A,B)<1$ we have $B\in\mathcal{G}^1$. Therefore, $\sup\limits_{0\leq t\leq 1}\log^{+}\|\Phi_B(t,\omega)^{\pm1}\|\in L^{1}(\mu)$.
\end{enumerate}
\end{proposition}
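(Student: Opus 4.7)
My plan is to dispatch the two items using two standard tools: the inclusion $L^q(\mu) \subset L^p(\mu)$ with $\|\cdot\|_{L^p} \leq \|\cdot\|_{L^q}$ for $q \geq p$, valid on the probability space $(M,\mathcal{M},\mu)$ by Jensen's inequality, and a Gr\"onwall estimate on the Carath\'eodory equation \eqref{eq:LDS}.

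For item (i), I would apply the $L^p$-$L^q$ inequality to $f(\omega) = \|A(\omega) - B(\omega)\|$ to obtain $\hat\sigma_p(A,B) \leq \hat\sigma_q(A,B)$, and then observe that $t \mapsto t/(1+t)$ is nondecreasing on $[0,\infty]$ (with value $1$ at $\infty$), so the same monotonicity transfers to $\sigma_p$, including the case when $\hat\sigma_q(A,B) = \infty$.

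For item (ii), the first claim reduces to the same inclusion: $\sigma_p(A,B) < 1$ forces $\hat\sigma_p(A,B) < \infty$, hence $\|A-B\|_{L^1(\mu)} \leq \|A-B\|_{L^p(\mu)} < \infty$, and the triangle inequality $\|B\| \leq \|A\| + \|A-B\|$ yields $B \in \mathcal{G}^1$. Once this is established, \cite[Thm.\,2.2.2]{A} guarantees $\Phi_B$. For the logarithmic integrability, I would apply Gr\"onwall to \eqref{eq:LDS} to obtain $\|\Phi_B(t,\omega)\| \leq \exp \int_0^t \|B(\varphi^s \omega)\|\,ds$; taking $\log^+$ and then $\sup_{t\in[0,1]}$ bounds the quantity by $\int_0^1 \|B(\varphi^s\omega)\|\,ds$, whose $L^1(\mu)$-norm equals $\|B\|_{L^1(\mu)} < \infty$ by Fubini and the $\varphi^s$-invariance of $\mu$. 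The inverse is handled analogously, using the $2\times 2$ identity $\|M^{-1}\| = \|M\|/|\det M|$ combined with Liouville's formula $\det \Phi_B(t,\omega) = \exp \int_0^t \tr B(\varphi^s \omega)\,ds$ and the elementary bound $|\tr B| \leq 2\|B\|$; the resulting upper bound is again an $L^1(\mu)$-multiple of $\int_0^1 \|B(\varphi^s\omega)\|\,ds$.

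This proposition is essentially a collection of sanity checks, so I do not expect a substantive obstacle. The only point requiring care is the ordering of the argument: one must first establish $B \in \mathcal{G}^1$ before invoking \eqref{eq:LDS}, since otherwise $\Phi_B$ is not guaranteed to exist as a Carath\'eodory solution.
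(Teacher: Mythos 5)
Your proof is correct, and for item (i) and the first half of item (ii) it coincides with the paper's argument: the $L^q\hookrightarrow L^p$ embedding on the probability space gives $\hat\sigma_p\leq\hat\sigma_q$ and the monotonicity of $t\mapsto t/(1+t)$ transfers this to $\sigma_p$; and $\sigma_p(A,B)<1$ forces $\hat\sigma_1(A,B)<\infty$, whence $B\in L^1(\mu)$ by the triangle inequality. For the integrability of $\sup_{0\le t\le 1}\log^+\|\Phi_B(t,\omega)^{\pm1}\|$, the paper simply invokes the already-established integrability condition \eqref{eq IC}; the paper's underlying derivation of \eqref{eq IC} obtains the bound for both $\Phi_A$ and $\Phi_A^{-1}$ via Gr\"onwall applied to the Carath\'eodory integral equation (the inverse cocycle satisfies an analogous integral equation, so the same Gr\"onwall exponential bound applies, a route that works in any dimension $d$). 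Your treatment of the inverse is genuinely different: you use the $2\times2$-specific identity $\|M^{-1}\|=\|M\|/|\det M|$ together with Liouville's formula $\det\Phi_B(t,\omega)=\exp\int_0^t\mathrm{tr}\,B(\varphi^s\omega)\,ds$ and $|\mathrm{tr}\,B|\le 2\|B\|$. This is more concrete and avoids reasoning about the integral equation for $\Phi_B^{-1}$, at the cost of relying on the two-dimensionality of the fiber; the paper's Gr\"onwall route is dimension-agnostic. Your closing remark about the ordering of the argument (establish $B\in\mathcal{G}^1$ before invoking existence of $\Phi_B$) is a sound point that the paper leaves implicit.
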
	

\begin{proof}

(i) For all $1\leq p\leq q <\infty$ it is a standard result on Lebesgue spaces that $L^q$-norms are thinner than $L^p$ ones. So, we have $\hat\sigma_p(A,B)\leq \hat\sigma_q(A,B)$. Hence, $\sigma_p(A,B)\leq \sigma_q(A,B)$.\\
(ii) Pick any $1\leq p <\infty$ and let $A\in\mathcal{G}^1$ and $B\in\mathcal{G}$ satisfying $\sigma_p(A,B)<1$. By (i) we have $\sigma_1(A,B)<1$, thus $\hat\sigma_1(A,B)<\infty$, and we also have that
$\|B(\w)\|\leq \|A(\w)\|+\|B(\w)-A(\w)\|$  for all $\w\in M$. Therefore, $B\in L^1(\mu)$. The last statement follows from~\eqref{eq IC}. 
\end{proof}

The following result will be used to prove that the kinetic subspace of linear cocycles is a Baire space.

\begin{lemma}
		$(\mathcal{K}^1,\sigma_p)$ is closed, for all $1\leq p <\infty$.
	\end{lemma}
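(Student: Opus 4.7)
The plan is to establish sequential closedness: given a sequence $(A_n)\subset\mathcal{K}^1$ with $\sigma_p(A_n,A)\to 0$ for some $A\in\mathcal{G}$, I want to conclude that $A\in\mathcal{K}^1$. This amounts to verifying two properties of the limit: that $A(\omega)\in K$ for $\mu$-a.e.\ $\omega$, and that $A\in L^1(\mu)$.

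First I would unpack what $\sigma_p$-convergence really means. Since $\sigma_p(A_n,A)\to 0$, eventually $\sigma_p(A_n,A)<1$; the definition of $\sigma_p$ then forces $\hat\sigma_p(A_n,A)<\infty$, and the monotonicity of $x\mapsto x/(1+x)$ yields $\hat\sigma_p(A_n,A)\to 0$. Hence $A_n\to A$ in the $L^p(\mu)$-norm on matrix-valued measurable maps. Membership of $A$ in $\mathcal{G}^1$ is then immediate from Proposition~\ref{complete}(ii): since $A_n\in\mathcal{K}^1\subset\mathcal{G}^1$ and eventually $\sigma_p(A_n,A)<1$, we obtain $A\in\mathcal{G}^1$, i.e.\ $A\in L^1(\mu)$.

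To pin down the kinetic structure of $A$, I would extract a subsequence $(A_{n_k})$ converging $\mu$-almost everywhere to $A$, using the standard fact that $L^p$-convergence entails convergence in measure and hence admits an a.e.\ convergent subsequence. The target set $K$ of $2\times 2$ matrices whose first row equals $(0,1)$ is a closed (two-dimensional affine) subspace of $\mathbb{R}^{2\times 2}$; since $A_{n_k}(\omega)\in K$ for every $k$, passing to the pointwise limit yields $A(\omega)\in K$ for $\mu$-a.e.\ $\omega$, so $A\in\mathcal{K}$. Combined with $A\in\mathcal{G}^1$ this proves $A\in\mathcal{K}^1$.

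There is no substantial obstacle here: the proof reduces to a standard $L^p$-closedness scheme (pass to an a.e.\ convergent subsequence and exploit the closedness of the target set in $\mathbb{R}^{2\times 2}$). The only point demanding mild care is the bookkeeping that converts $\sigma_p$-convergence into genuine $L^p$-convergence, a step made possible by the bounded-distance truncation built into the definition of $\sigma_p$.
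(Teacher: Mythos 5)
Your proof is correct and follows essentially the same route as the paper's: both reduce via Proposition~\ref{complete}(i) to $\sigma_1$-convergence, use Proposition~\ref{complete}(ii) to obtain $A\in\mathcal{G}^1$, and then exploit $L^1$-convergence of the entries to pin down the kinetic form. The only minor variation is in that last step, where you pass to an a.e.-convergent subsequence and invoke closedness of $K$ in $\mathbb{R}^{2\times 2}$, whereas the paper directly observes that $\int_M|a|\,d\mu$ and $\int_M|b-1|\,d\mu$ must vanish; the two deductions are interchangeable and there is no substantive gap.
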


    \begin{proof}
Given any sequence $(A_{n})_n$ in $\mathcal{K}^1$, with $A_{n}\stackrel{\sigma_p}{\longrightarrow}A^{*}$, for some $A^{*}\in \mathcal{G}$ we must prove that $A^{*}\in\mathcal{K}^1$. As $(A_{n})_n\in \mathcal{K}^1$ we get 
\[
A_n(\w)=\begin{pmatrix} 0 & 1 \\ -\beta_n(\w) & -\alpha_n(\w)\end{pmatrix}
\,\,\,\text{ and }\,\,\, A^{*}=\begin{pmatrix} a(\w) & b(\w) \\ -\beta(\w) & -\alpha(\w)\end{pmatrix}
\] for maps $\alpha_n, \beta_n\in L^1(\mu)$ and measurable maps $a$, $b$, $\alpha$ and $\beta$. From (i) in Proposition~\ref{complete} we have $A_{n}\stackrel{\sigma_1}{\longrightarrow}A^{*}$. From (ii) in Proposition~\ref{complete} we have that $a$, $b$, $\alpha$ and $\beta$ are also $L^1$ maps, i.e. $A^{*}\in \mathcal{G}^1$. Hence, $\int_M |a(\w)|\,d\mu(\w)=0$ and $\int_M |b(\w)-1|\,d\mu(\w)=0$, which implies $a(\w)=0$ and $b(\w)=1$ for $\mu$ almost every $\w$. Therefore, $A^{*}\in \mathcal{K}$. In conclusion, $A^{*}\in  \mathcal{G}^1\cap \mathcal{K}=\mathcal{K}^1$.
     \end{proof}

Since for all $1\leq p <\infty$, the metric space $(\mathcal{G}^1,\sigma_p)$ is complete and $\mathcal{K}^1\subset\mathcal{G}^1$ is $\sigma_p$-closed, we conclude the following.

\begin{corollary}\label{complete 2}
		For all $1\leq p <\infty$, $(\mathcal{K}^1,\sigma_p)$ is a complete metric space and, therefore, a Baire space.
	\end{corollary}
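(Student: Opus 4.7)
The plan is to deduce the statement as an immediate consequence of two standard facts: a closed subspace of a complete metric space is itself complete, and every complete metric space enjoys the Baire property. Both ingredients are either stated or essentially available in the material preceding the corollary, so the argument amounts to carefully assembling them.

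First, I would verify the ambient completeness, namely that $(\mathcal{G}^1,\sigma_p)$ is a complete metric space. Given a $\sigma_p$-Cauchy sequence $(A_n)_n$ in $\mathcal{G}^1$, eventually $\sigma_p(A_n,A_m)<1$, which by the definition of $\sigma_p$ forces $\hat\sigma_p(A_n,A_m)$ to be finite and small; indeed, the map $t\mapsto t/(1+t)$ is a homeomorphism of $[0,\infty)$ onto $[0,1)$, so $(A_n)_n$ is genuinely $\hat\sigma_p$-Cauchy. Applying completeness of $L^p(\mu)$ entrywise, there exists a measurable limit $A^\ast$ with $\hat\sigma_p(A_n,A^\ast)\to 0$. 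Part (ii) of Proposition~\ref{complete} then applies: for $n$ large, $A_n\in\mathcal{G}^1$ and $\sigma_p(A_n,A^\ast)<1$, hence $A^\ast\in\mathcal{G}^1$, which yields completeness.

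Combining this with the preceding lemma, which establishes that $\mathcal{K}^1$ is $\sigma_p$-closed in $\mathcal{G}^1$, any $\sigma_p$-Cauchy sequence in $\mathcal{K}^1$ converges to an element of $\mathcal{G}^1$ that must in fact lie in $\mathcal{K}^1$. Hence $(\mathcal{K}^1,\sigma_p)$ is complete. Invoking the Baire category theorem in its usual form for complete metric spaces then yields that $(\mathcal{K}^1,\sigma_p)$ is a Baire space. There is no genuine obstacle at this stage; the substantive work, namely the $L^p$-based completeness of matrix-valued generators together with the closedness of $\mathcal{K}^1$ inside $\mathcal{G}^1$, has been absorbed into Proposition~\ref{complete} and the preceding lemma, so the corollary itself is just a one-line bookkeeping step.
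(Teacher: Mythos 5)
Your proposal is correct and follows the same route as the paper: $\mathcal{K}^1$ is $\sigma_p$-closed in the complete space $(\mathcal{G}^1,\sigma_p)$, hence complete, hence Baire. One small precision in your completeness verification for $\mathcal{G}^1$ (which the paper merely asserts): the elements $A_n\in\mathcal{G}^1$ are only known to be in $L^1(\mu)$, not in $L^p(\mu)$ for $p>1$, so one should apply $L^p$-completeness to the difference sequence $(A_n-A_{n_0})_{n\ge n_0}$ for a fixed $n_0$ large enough that all subsequent $\hat\sigma_p$-distances are finite, and then set $A^\ast=A_{n_0}+\lim_n(A_n-A_{n_0})$; the rest of your argument, including the use of Proposition~\ref{complete}(ii) to conclude $A^\ast\in\mathcal{G}^1$, then goes through as written.
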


	\subsection{Lyapunov exponents}
	Let $\mathcal{K}^1=\mathcal{K}\cap L^1(\mu)\subset \mathcal{G}^1$. Observe that if $A\in\mathcal{K}^1$ then the cocycle $\Phi_A$ satisfies the following \emph{integrability condition}
	\begin{equation}\label{eq IC}
	\sup\limits_{0\leq t\leq 1}\log^{+}\|\Phi_A(t,\omega)^{\pm1}\|\in L^{1}(\mu),
	\end{equation}	
	where $f^+=\max\{0,f\}$. Indeed, consider $\w$ in the full measure $\varphi^t$-invariant subset of $M$ where $t\mapsto A(\varphi^t(\omega))$ is locally integrable. By \eqref{eq:LDS}  we get $$\|\Phi_{A}(t,\omega)^{\pm1}\|\leq1+\int_{0}^{t}\left\|A(\varphi^{s}(\omega))\right\|\,\left\|\Phi_{A}(s,\omega)^{\pm1}\right\|\,ds.$$ By Gr\"onwall's inequality (see \cite{A}) we have 
	\begin{equation}\label{Gronwall}
	\Phi_{A}(t,\omega)^{\pm1}\|\leq \exp\left(\int_{0}^{t}\|A(\varphi^{s}(\omega))\|\,ds\right)
	\end{equation} 
	that is, for all $t\geq 0$ we have $\log^+\|\Phi_{A}(t,\omega)^{\pm1}\|\leq \int_{0}^{t}\|A(\varphi^{s}(\omega))\|\,ds$. Therefore 
	\begin{equation}\label{eqIC3}
	\sup\limits_{0\leq t\leq T}\log^+\|\Phi_{A}(t,\omega)^{\pm1}\|\leq\int_{0}^{T}\|A(\varphi^{s}(\omega))\|\,ds=:\psi(\w,T).
	\end{equation}
	 By \cite[Lemma 2.2.5]{A} we have $\psi(\cdot,T)\in L^1(\mu)$. Thus $\sup\limits_{0\leq t\leq 1}\log^+\|\Phi_A(t,\w)^{\pm1}\|\in L^1(\mu)$, getting the claim. 	 
	 Moreover, Fubini's theorem allow us to obtain
	\begin{align*}
	\int_M\log^+\|\Phi_{A}(t,\omega)^{\pm1}\|\,d\mu(\omega)&\leq \int_M\int_{0}^{t}\|A(\varphi^{s}(\omega))\|\,ds\,d\mu(\omega)\\
	&=\int_{0}^{t}\int_M\|A(\varphi^{s}(\omega))\|\,d\mu(\omega)\,ds=t\|A\|_1.
	\end{align*}	
	
	If $A\in{\mathcal{G}^1}$ then the integrability condition \eqref{eq IC} holds and Oseledets theorem (see e.g. \cite{O,A}) guarantees that for $\mu$ almost every $\omega\in M$, there exists a $\Phi_A$-invariant splitting called \emph{Oseledets splitting} of the fiber $\mathbb{R}^{2}_{\omega}=E^{1}_{\omega}\oplus E^{2}_{\omega}$ and real numbers called \emph{Lyapunov exponents} $\lambda_{1}(A,\omega)\geq\lambda_{2}(A,\omega)$, such that:
	\begin{equation*}
	\underset{t\rightarrow{\pm{\infty}}}{\lim}\frac{1}{t}\log{\|\Phi_A(t,\omega) v^{i}\|=\lambda_{i}(A,\omega)},
	\end{equation*}
	for any $v^{i}\in{E^{i}_{\w}\setminus\{\vec{0}\}}$ and $i=1, 2$. Moreover, given any of these subspaces $E^{1}_{\w}$ and $E^{2}_{\w}$, the angle between them along the orbit has subexponential growth, meaning that
	\begin{equation}\label{angle}
	\lim_{t\rightarrow{\pm{\infty}}}\frac{1}{t}\log\sin\left(\measuredangle(E^{1}_{\varphi^{t}(\omega)},E^{2}_{\varphi^{t}(\omega)})\right)=0.
	\end{equation}
		If the flow $\varphi^{t}$ is ergodic, then the Lyapunov exponents and the dimensions of the associated subbundles are constant $\mu$ almost everywhere and we refer to them as $\lambda_1(A)$ and as $\lambda_2(A)$, with $\lambda_1(A)\geq\lambda_2(A)$. We say that $A$ has \emph{one-point Lyapunov spectrum} or \emph{trivial Lyapunov spectrum} if for $\mu$ a.e. $\w\in M$, $\lambda_{1}(A,\omega)= \lambda_{2}(A,\omega)$. Otherwise we say  $A$ has \emph{simple Lyapunov spectrum}. For details on these results on linear differential systems see \cite{A} (in particular, Example 3.4.15). See also ~\cite{JPS}.
				
	\medskip

	\subsection{Statement of the main result}
	We are now in position to state our main contribution. In the present paper we establish the existence of a $\sigma_p$-residual of $\mathcal{K}^1$ displaying one-point spectrum. More precisely we prove the following:
	
	\bigskip
	
	\begin{maintheorem}\label{ops2}
		For all $1\leq p < \infty$ there exists a $\sigma_p$-residual subset $\mathcal{R}\in \mathcal{K}^1$ such that any $A\in\mathcal{R}$ has one-point spectrum.
	\end{maintheorem}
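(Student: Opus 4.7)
The strategy I would follow is the Ma\~n\'e--Bochi dichotomy, adapted to the $\sigma_p$ topology and to the kinetic constraint. By Kingman's subadditive ergodic theorem one has the variational expression $\lambda_1(A)=\inf_{t>0}\tfrac1t\int_M\log\|\Phi_A(t,\omega)\|\,d\mu(\omega)$, while Liouville's formula together with Birkhoff's ergodic theorem give $\lambda_1(A)+\lambda_2(A)=\int_M\mathrm{tr}\,A(\omega)\,d\mu(\omega)=-\int_M\alpha(\omega)\,d\mu(\omega)$. The latter quantity is clearly $\sigma_1$-continuous, hence $\sigma_p$-continuous by Proposition~\ref{complete}(i). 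Therefore proving one-point spectrum generically reduces to showing that $\lambda_1$ attains its lower bound $-\tfrac12\int_M\alpha\,d\mu$ on a $\sigma_p$-residual subset of $\mathcal{K}^1$.

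The first step is to establish that $\lambda_1\colon\mathcal{K}^1\to\mathbb{R}$ is $\sigma_p$-upper semi-continuous. For each fixed $t>0$, the integrand $\tfrac1t\log\|\Phi_A(t,\omega)\|$ is controlled uniformly on a $\sigma_p$-ball by the Gr\"onwall bound~\eqref{eqIC3}, so a sequence $A_n\to A$ in $\sigma_p$ together with the $L^p$-approximation of generators should yield $L^1$-convergence of the Carath\'eodory solutions on compact time windows, from which $\sigma_p$-continuity of $J_t(A):=\tfrac1t\int_M\log\|\Phi_A(t,\omega)\|\,d\mu$ follows by dominated convergence. Since $\lambda_1=\inf_{t>0}J_t$ is the infimum of continuous functions, it is $\sigma_p$-USC; on the Baire space $(\mathcal{K}^1,\sigma_p)$ provided by Corollary~\ref{complete 2}, its set of continuity points $\mathcal{R}$ is then a dense $G_\delta$.

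The heart of the proof is a perturbation lemma: for any $A\in\mathcal{K}^1$ with $\lambda_1(A)>\lambda_2(A)$ and any $\varepsilon>0$, there exists $\tilde A\in\mathcal{K}^1$ with $\sigma_p(A,\tilde A)<\varepsilon$ and $\lambda_1(\tilde A)\leq\tfrac12(\lambda_1(A)+\lambda_2(A))+\varepsilon$. Granted this lemma, continuity of $\lambda_1$ at any $A\in\mathcal{R}$ forces $\lambda_1(A)=\lambda_2(A)$, concluding the proof. The lemma itself rests on the Ma\~n\'e-style rotation idea: the subexponential growth of the angle between the Oseledets directions~\eqref{angle} implies that along orbit segments of positive $\mu$-measure and sufficiently long Birkhoff time, one can realise arbitrarily small rotations that bring the expanding Oseledets direction onto the contracting one, so that after iteration the accumulated growth rates are averaged.

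The main obstacle is implementing such rotations while remaining inside $\mathcal{K}^1$: a generic matrix rotation destroys the rigid first row $(0,1)$ of a kinetic generator, and one cannot conjugate $A$ by a rotation without leaving $\mathcal{K}$. The required rotations of $\Phi_A(t,\omega)$ must therefore be produced by modifying only the scalar coefficients $\alpha$ and $\beta$ on a carefully selected set of small $\mu$-measure, where the $\sigma_p$ topology gives us crucial freedom, as the $L^p$-norm of a perturbation can be made small even when its pointwise amplitude is large, provided the support has small measure. Matching this geometric construction with the $L^p$-continuous dependence of $\Phi_A$ established in Section~\ref{usc}, and organising the local rotations so that the total perturbation of $(\alpha,\beta)$ stays $\sigma_p$-small while producing the desired mixing of Oseledets directions, is the principal technical difficulty I expect to confront.
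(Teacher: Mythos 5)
Your proposal follows essentially the same route as the paper: establish upper semi-continuity of the top Lyapunov exponent on $(\mathcal{K}^1,\sigma_p)$ (the paper's Proposition~\ref{upper sc}), prove a perturbation lemma that pushes $\lambda_1$ down toward the average $\tfrac12(\lambda_1+\lambda_2)$ by a $\sigma_p$-small kinetic perturbation (Proposition~\ref{MProp}, built from Lemma~\ref{rot4cont}, a Rudolph tower, and Lemma~\ref{rot4pert}), and then invoke the Kuratowski fact that the continuity set of a USC function is residual, combined with the perturbation lemma, to force trivial spectrum at every continuity point (Theorem~\ref{contiunity points}). You also correctly identify the two central technical hurdles: implementing Ma\~n\'e-type rotations without leaving the rigid kinetic class (resolved in the paper by the elliptical-rotation generator $P=\bigl(\begin{smallmatrix}0&1\\-\theta^2&0\end{smallmatrix}\bigr)$, which stays in $K$), and the weak form of $L^p$-continuous dependence (Lemma~\ref{contdep2}).

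One claim in your sketch is too strong and should be flagged. You assert that $J_t(A):=\tfrac1t\int_M\log\|\Phi_A(t,\omega)\|\,d\mu$ is $\sigma_p$-continuous for each fixed $t$, with USC of $\lambda_1=\inf_t J_t$ following formally. In the $\sigma_p$ topology a perturbation of small $L^p$ size can have arbitrarily large pointwise amplitude on a set of small measure, so $\log\|\Phi_B(t,\omega)\|$ need not converge in $L^1$ to $\log\|\Phi_A(t,\omega)\|$, and dominated convergence does not apply directly. Indeed, the very perturbation lemma you plan to use shows $\mathscr{L}$ (and hence $J_t$ for large $t$) genuinely drops under small perturbations, so $J_t$ is not continuous. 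The paper circumvents this by restricting to a good set $G$ of large-but-not-full measure (built via Lemma~\ref{funcion f}), controlling the bad set $G^c$ through the uniform integrability of $\int_0^1\|B(\varphi^s\cdot)\|\,ds$, and establishing only upper semi-continuity of $\mathscr{L}$ by a direct estimate. Your approach survives — an infimum of USC (rather than continuous) functions is still USC — but proving even USC of each $J_t$ costs about as much as Proposition~\ref{upper sc}, so this step cannot be dispatched by a soft compactness argument.

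A pleasant minor difference: you invoke Liouville's formula to observe that $\lambda_1(A)+\lambda_2(A)=\int_M\operatorname{tr}A\,d\mu=-\int_M\alpha\,d\mu$ is $\sigma_1$-continuous, reducing the theorem to showing that $\lambda_1$ attains its continuous lower bound generically. The paper does not use this observation; it works directly with the jump map $\mathscr{J}=\tfrac12(\lambda_1-\lambda_2)$ and shows continuity of $\mathscr{L}$ at $A$ forces $\mathscr{J}(A)=0$. The two framings are equivalent, but yours makes the structure of the dichotomy (continuous sum, USC top exponent) slightly more transparent.
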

	
	\bigskip

	\section{Interchanging Oseledets' directions}\label{pmt}
	In this section we build up the fundamental perturbation tool which allows us to interchange Oseledets directions. First, we will discuss some topics about the perturbations that we will be tailor-made to our purpose.
	
	\medskip
	
	\subsection{Perturbations supported in compact sets}
Take $A\in\mathcal{G}$ and a non-periodic orbit $\omega\in M$. We will consider a perturbation $B_{\w}$ of $A$ only along a segment of the orbit of $\omega$ with extremes $\w$ and $\varphi^{1}(\w)$. Let $P\in\mathcal{G}$ be given and define $B\colon M\to \mathbb{R}^{2\times2}$ such that $B(\hat\w)=A(\hat\w)$ for all $\hat\w$ outside $\varphi^{[0,1]}(\w)=\{\varphi^{s}(\w): s\in[0, 1]\}$ and $B(\hat\w)=P(\hat\w)$ otherwise. The map $B$ is called  a \emph{perturbation of $A$ by $P$ supported on $\varphi^{[0,1]}(\w)$}.

\begin{lemma}\label{rot4cont}
Given $\w\in M$, $u,v\in\mathbb{R}^2\setminus\{ 0\}$, $A\in\mathcal{K}^1$, there is $\upgamma\neq 0$, and a perturbation $B_\w\in\mathcal{K}^1$ of $A$ supported on $\varphi^{[0,1]}(\w)$ such that:
  \begin{enumerate}
  \item[(i)] $\|B_\w(\hat \w)\|\leq 4\pi^2$ for all $\hat\w$ on $\varphi^{[0,1]}(\w)$ and
  \item[(ii)] $\Phi_{B_\w}(1,\omega)u=\upgamma\, v$.
  \end{enumerate}

\end{lemma}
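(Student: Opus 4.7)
I take $B_\omega$ to coincide with $A$ outside the segment $\varphi^{[0,1]}(\omega)$ and, on the segment, to equal the constant pure harmonic-oscillator kinetic matrix
\begin{equation*}
B_\omega(\varphi^s(\omega))=\begin{pmatrix} 0 & 1 \\ -\kappa^2 & 0 \end{pmatrix},\qquad s\in[0,1],
\end{equation*}
where $\kappa\in[0,2\pi]$ is a parameter to be chosen. By construction this is a kinetic $\mathcal{K}^1$-perturbation of $A$ with operator norm at most $\max(\kappa^2,1)\leq 4\pi^2$ on the segment, settling (i). The whole content of the lemma is then to tune $\kappa$ so that the time-one flow sends the line $\R u$ to the line $\R v$; the nonvanishing of $\upgamma$ in (ii) is automatic, since $\Phi_{B_\omega}(1,\omega)$ is invertible and $u\neq 0$.

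To control the direction of $\Phi_{B_\omega}(1,\omega)u$ I pass to the Pr\"ufer angle $\phi$ defined by $(x,\dot x)=r(\sin\phi,\cos\phi)$. A routine calculation from $\ddot x+\kappa^2 x=0$ reduces the two-dimensional flow to the scalar autonomous equation
\begin{equation*}
\dot\phi=\cos^2\phi+\kappa^2\sin^2\phi,
\end{equation*}
which is strictly positive whenever $\kappa>0$. Letting $\phi_u$ and $\phi_v$ denote the Pr\"ufer angles of $u$ and $v$, condition (ii) becomes: find $\kappa\in[0,2\pi]$ such that the solution $\phi(\cdot;\kappa)$ with $\phi(0)=\phi_u$ satisfies $\phi(1;\kappa)\equiv \phi_v\pmod{\pi}$.

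The final step is an intermediate value argument in $\kappa$. By continuous dependence on parameters, $\kappa\mapsto\phi(1;\kappa)$ is continuous, so its image is an interval. At $\kappa=2\pi$ the classical identity
\begin{equation*}
\int_0^{2\pi}\frac{d\phi}{\cos^2\phi+\kappa^2\sin^2\phi}=\frac{2\pi}{\kappa}
\end{equation*}
says the period of the $\phi$-flow equals $2\pi/\kappa=1$, so that $\phi(1;2\pi)-\phi_u=2\pi$ independently of $\phi_u$. At $\kappa=0$ the equilibria of $\dot\phi=\cos^2\phi$ at $\phi=\pi/2+k\pi$ block the advance of $\phi$ in unit time, giving the strict bound $\phi(1;0)-\phi_u<\pi$ (regardless of $\phi_u$). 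Hence the image of $\kappa\mapsto\phi(1;\kappa)-\phi_u$ is a connected set containing both a value $<\pi$ and the value $2\pi$, thus an interval of length $>\pi$, which therefore surjects onto $\R/\pi\Z$. Any $\kappa$ that realises the residue class of $\phi_v-\phi_u$ produces the desired $B_\omega$.

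The main technical care I expect to take is in pinning down these two boundary cases: the ``exactly one full revolution in unit time'' identity at $\kappa=2\pi$ is what forces the precise constant $4\pi^2$ in (i), while the strict upper bound $<\pi$ at $\kappa=0$ is what guarantees that the continuous image sweeps out a full period of $\R/\pi\Z$, so that every direction can be matched.
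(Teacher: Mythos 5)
Your proof is correct and rests on the same core construction as the paper: a constant kinetic harmonic--oscillator generator on the segment $\varphi^{[0,1]}(\omega)$, with the nonnegative spring parameter chosen in $[0,2\pi]$ so that the time-one flow carries $\mathbb{R}u$ onto $\mathbb{R}v$ while the operator norm on the segment stays below $4\pi^2$; invertibility of $\Phi_{B_\omega}(1,\omega)$ then gives $\gamma\neq 0$ for free, exactly as you note. Where you differ is in how the parameter is pinned down. The paper simply sets $\theta=\measuredangle(\mathbb{R}u,\mathbb{R}v)\in[\pi,2\pi]$ and asserts $\Phi_P(1,\omega)u=\gamma v$. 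That assertion is not literal: the elliptical rotation $\Phi_P(1,\omega)$ equals $D^{-1}R(-\theta)D$ with $D=\mathrm{diag}(\theta,1)$ and $R$ a Euclidean rotation, so it advances a direction by exactly $\theta$ only in the $D$-conjugated coordinates, and the $D$-conjugation generally distorts the angle between $u$ and $v$. The correct $\theta$ is therefore the solution of an implicit angle-matching relation, and one still has to argue such a solution exists in the allowed range. Your Pr\"ufer reduction $\dot\phi=\cos^2\phi+\kappa^2\sin^2\phi$, together with the period identity giving $\phi(1;2\pi)-\phi_u=2\pi$, the equilibrium obstruction giving $\phi(1;0)-\phi_u<\pi$, and the intermediate value theorem, supplies exactly that existence argument: the continuous image of $\kappa\mapsto\phi(1;\kappa)-\phi_u$ is an interval of length greater than $\pi$ and so meets every residue class mod $\pi$. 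In short, same construction and same constant $4\pi^2$, but you make explicit the tuning step the paper states without proof, and in doing so close a small gap in the paper's reasoning.
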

	
\begin{proof}
Let $\theta=\measuredangle(\mathbb{R}u,\mathbb{R}v)\in [\pi,2\pi]$ measured clockwise. Set a constant infinitesimal generator $P\colon M\to \mathbb{R}^{2\times2}$ given by 
\begin{equation}\label{infgen}
P=\left(\begin{matrix} 0 & 1\\ -\theta^2 & 0\end{matrix}\right).
\end{equation} 
We consider the perturbation $B_\w\in\mathcal{K}^1$ of $A$ by $P$ supported on $\varphi^{[0,1]}(\w)$. 
The infinitesimal generator in \eqref{infgen} generates a linear differential system with fundamental classical solution \eqref{eq:LDS2} given, for all $\hat\w\in M$ and $t\in\mathbb{R}$ by the `clockwise elliptical rotation' defined by:
\begin{equation*}
\Phi_{P}(t,\w)=\left(\begin{matrix}\cos(\theta t)& \theta^{-1}\sin(\theta t)\\ -\theta\sin(\theta t) & \cos(\theta t)\end{matrix}\right),
\end{equation*} 
and such that  $\Phi_{B_\w}(1,\w)u=\Phi_{P}(1,\w)u=\upgamma v$, for some $\upgamma\neq0$ fulfilling (ii). 
\end{proof}

\begin{remark}\label{maisfrente}
In the application of Lemma~\ref{rot4cont} further ahead we will consider $u$ and $v$ such that $\mathbb{R} u=E_{\w}^1$ and $\mathbb{R} v=E_{\varphi^{1}(\w)}^2$ where $E^i$ are the Oseledets directions associated to $A$. In particular,  $\Phi_{B_\w}(1,\w)E_{\w}^1=E_{\varphi^{1}(\w)}^2$. Morever, as the canonic norm and the maximum norm are equivalent item (i) of Lemma \ref{rot4cont} and~\eqref{Gronwall} impies $\|\Phi_{B_\w}(1,\w)\|_{\max}\leq C \|\Phi_{B_\w}(1,\w)\|\leq C \text{e}^{4\pi^2}$.
\end{remark}

\subsection{Special flows}
Consider a measure space $\Sigma$, a map $\mathcal{T}\colon \Sigma\rightarrow{\Sigma}$, a $\mathcal{T}$-invariant probability
measure $\tilde{\mu}$ defined in $\Sigma$ and a roof function
$h\colon \Sigma\rightarrow{\mathbb{R}^{+}}$ satisfying $h(\w)\geq H>0$, for some $H>0$ and all $\w\in{\Sigma}$, and 
 $
 \int_{\Sigma}h(\w)d\tilde{\mu}(\w)<\infty
$.

Define the space $M_{h}\subseteq{\Sigma\times{\mathbb{R}_+}}$ by
$$
M_h=\{(\w,t) \in \Sigma\times{\mathbb{R}_+}: 0 \leq t \leq h(\w) \}
$$
with the identification between the pairs $(\w,h(\w))$ and $(\mathcal{T}(\w),0)$. The semiflow defined
on $M_h$ by $S^s(\w,r)=(\mathcal{T}^{n}(\w),r+s-\sum_{i=0}^{n-1}h(\mathcal{T}^{i}(\w)))$,
where $n\in{\mathbb{N}}$ is uniquely defined by
$$
\sum_{i=0}^{n-1}h(\mathcal{T}^{i}(\w))\leq{r+s}<\sum_{i=0}^{n}h(\mathcal{T}^{i}(\w))
$$
is called a \emph{suspension semiflow}. If $\mathcal{T}$ is invertible then $(S^t)_t$ is a flow.
Furthermore, if $\ell$ denotes the one dimensional Lebesgue measure the measure $\mu=(\tilde\mu \times \ell)/\int h\, d\tilde\mu$ defined on $M_h$
by
$$
\int g \, d\mu= \frac{1}{\int h\, d\tilde\mu} \int \left( \int_0^{h(\w)} g(\w,t) dt \right)\, d\tilde\mu(\w),  \quad \forall g\in C^0(M_h)
$$
is a probability measure and it is invariant by the suspension semiflow $(S^t)_t$. Flows with such representation are called \emph{special flows}.

\medskip

Next result, despite simple, will be the key step to prove second part of Proposition~\ref{MProp}. We fix a special flow described by the quadruple $(\varphi^t,\Sigma,\mathcal{T},h)$ endowed with product measure $\tilde\mu\times\ell$, $\Sigma_0\subseteq \Sigma$ with $\tilde{\mu}(\Sigma_0)>0$. Moreover, for $b>a>0$ we define the set
\begin{equation*}
\varphi^{[a,b]}(\Sigma_0)=\{\varphi^t(\w)\colon \w\in \Sigma_0,\, t\in[a,b]\}.
\end{equation*}
Given $A\in\mathcal{G}^1$, $P\in\mathcal{G}$, $\Sigma_0\subseteq\Sigma$ and $a>0$, we may extend the perturbation of $A$ by $P$ to be supported on the flowbox $\varphi^{[a,a+1]}({\Sigma_0})$ in the following way: for $\w\in \varphi^{[a,a+1]}({\Sigma_0})$ we project $\w$ in $\tilde\w\in\Sigma_0$ and let $B_{\tilde\w}$ be perturbation of $A$ by $P$ supported on $\varphi^{[0,1]}(\varphi^a(\w))$, and define
\begin{equation}\label{perturbacao global}
		B(\w):=\left\{\begin{array}{lll}A(\w),\,&&\text{if}\,\, \w\notin  \varphi^{[a,a+1]}({\Sigma_0})\\   B_{\tilde \w}(\w),\,&&\text{if}\,\, \w\in  \varphi^{[a,a+1]}({\Sigma_0})         \\    \end{array}\right..
	\end{equation}
To distinguish the situations we refer for $B(\w)$ as a \emph{global perturbation of $A$ by $P$ supported in $\varphi^{[a,a+1]}(\Sigma_0)$}.

\begin{lemma}\label{rot4pert}
  For all $A\in\mathcal{G}^1$, $a>0$ and $\varepsilon\in]0,1[$, there exists a measurable set $\Sigma_0\subset \Sigma$ with $\tilde\mu(\Sigma_0)>0$ such that for any global perturbation $B\in\mathcal{G}^1$ of $A$ supported in the flowbox $\varphi^{[a,b]}(\Sigma_0)$ with $\|B(\varphi^t(\w))\|\leq 4\pi^2$ for all $\w\in \Sigma_0$ and $t\in[a,b]$, we have that $\sigma_1(A,B)<\varepsilon$.
\end{lemma}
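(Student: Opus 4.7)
The plan is to view this as a direct $L^1$ estimate: since $B\equiv A$ off the flowbox $F:=\varphi^{[a,b]}(\Sigma_0)$, while $\|B\|\leq 4\pi^2$ on $F$ by hypothesis, the triangle inequality gives
$$\hat\sigma_1(A,B) = \int_F\|A-B\|\,d\mu \;\leq\; \int_F\|A\|\,d\mu + 4\pi^2\,\mu(F).$$
Because the map $x\mapsto x/(1+x)$ is strictly increasing on $[0,\infty)$, the target inequality $\sigma_1(A,B)<\varepsilon$ is equivalent to $\hat\sigma_1(A,B)<\varepsilon':=\varepsilon/(1-\varepsilon)$, so it is enough to make the right-hand side above strictly less than $\varepsilon'$.

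The next step is to shrink $\Sigma_0$ so that $\mu(F)$ becomes small while remaining positive. Using the product description $\mu=(\tilde\mu\times\ell)/\int h\,d\tilde\mu$ of the special flow, and (if necessary) intersecting $\Sigma_0$ with a sublevel set $\{h\geq b\}$ of positive $\tilde\mu$-measure to avoid wrapping around the roof, one obtains the bound
$$\mu(F) \;\leq\; \frac{(b-a)\,\tilde\mu(\Sigma_0)}{\int_\Sigma h\,d\tilde\mu}.$$
Since $\|A\|\in L^1(\mu)$, absolute continuity of the Lebesgue integral supplies $\delta>0$ such that $\mu(E)<\delta$ forces $\int_E\|A\|\,d\mu<\varepsilon'/2$. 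Choosing any measurable $\Sigma_0\subset\Sigma$ of positive but sufficiently small $\tilde\mu$-measure, so that $\mu(F)<\min\{\delta,\,\varepsilon'/(8\pi^2)\}$, then yields $\hat\sigma_1(A,B)<\varepsilon'/2+\varepsilon'/2=\varepsilon'$, and hence $\sigma_1(A,B)<\varepsilon$.

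The principal obstacle is the ability to select $\Sigma_0$ with arbitrarily small \emph{positive} $\tilde\mu$-measure. In the standing framework this is underpinned by non-atomicity of $\tilde\mu$, which is standard in the Lebesgue-space/special-flow setting employed here; the only possible obstruction would be an ergodic $\mathcal{T}$ supported on a single periodic orbit, a degenerate case ruled out in the applications that follow. A secondary technicality is the flowbox wrapping over the roof whenever $b$ exceeds the essential supremum of $h$; this is dealt with either by restricting $\Sigma_0$ to the appropriate level set of $h$, or by partitioning $[a,b]$ into finitely many subintervals of length less than $H$ and applying the same absolute-continuity argument on each resulting sub-flowbox.
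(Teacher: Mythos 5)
Your proposal is correct, and in fact it repairs a gap in the paper's own proof. The paper also estimates $\hat\sigma_1(A,B)=\int_F\|A-B\|\,d\mu$ over the flowbox $F=\varphi^{[a,b]}(\Sigma_0)$ and shrinks $\tilde\mu(\Sigma_0)$, but it bounds $\int_F\|A\|\,d\mu\leq\mu(F)\,L$ where $L$ is chosen with $\int_M\|A\|\,d\mu<L$; that step would require $\|A\|\leq L$ pointwise, which does not follow for $A$ merely in $L^1$. Your use of absolute continuity of the Lebesgue integral — choosing $\delta$ so that $\mu(E)<\delta\Rightarrow\int_E\|A\|\,d\mu<\varepsilon'/2$ — is exactly what is needed to control the $\|A\|$ contribution, and the $\|B\|$ contribution is handled by the pointwise bound $4\pi^2$ just as in the paper. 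Your remarks on non-atomicity of $\tilde\mu$ and on wrapping past the roof are legitimate technical points: non-atomicity holds here since $\varphi^t$ is ergodic and aperiodic (Rudolph's representation presupposes this), and wrapping is avoided because the roof takes values in $\{N_0+1,N_0+q\}$ with $N_0$ large compared to the window length $b-a\leq 1$ used in the applications, so in practice one can also just take $b-a$ below the floor $H$ of $h$ rather than intersecting with a level set. One small normalization point: the paper writes $\mu(F)=(b-a)\tilde\mu(\Sigma_0)$, whereas with $\mu=(\tilde\mu\times\ell)/\int h\,d\tilde\mu$ the correct identity carries the factor $1/\int h\,d\tilde\mu$ as you note; this is immaterial since it is a fixed constant, but your version is the accurate one.
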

\begin{proof}
Let $A\in\mathcal{G}^1$, $b>a>0$ and $\varepsilon\in]0,1[$ be fixed. Since $A\in\mathcal G^1$ we let $L>0$ be such that $\int_M\|A(\w)\|\,d\mu(\w)<L$. For any $\Sigma_0\subset \Sigma$ we have $\mu(\varphi^{[a,b]}(\Sigma_0))= (b-a)\tilde{\mu}(\Sigma_0)$. Let $\varepsilon'=\frac{\varepsilon}{1-\varepsilon}$ and choose a measurable set $\Sigma_0\subset \Sigma$ satisfying
\begin{equation}\label{sigma}
\tilde\mu(\Sigma_0)\in \left]0,\frac{\varepsilon'}{(b-a)(L+4\pi^2)}\right[.
\end{equation}
Finally, we will check that $\sigma_1(A,B)<\varepsilon$. Indeed, from \eqref{sigma} we get:
\begin{eqnarray*}
\hat\sigma_1(A,B)&=&\int_{M}\|A(\omega)-B(\omega)\|\,d\mu(\w)=\int_{\varphi^{[a,b]}(\Sigma_0)}\|A(\omega)-B(\omega)\|\,d\mu(\w)\\
&\leq& \int_{\varphi^{[a,b]}(\Sigma_0)} \|A(\w)\| + \|B(\omega)\|\,d\mu(\w)\leq \mu(\varphi^{[a,b]}(\Sigma_0))(L+4\pi^2)\\
&=&(b-a)(L+4\pi^2)\tilde{\mu}(\Sigma_0)<\varepsilon',
\end{eqnarray*}
which implies $\sigma_1(A,B)<\varepsilon$.
\end{proof}

\subsection{Lowering the maximal Lyapunov exponent} The next result asserts that is possible to lower the maximal Lyapunov exponent of a kinetic cocycle with simple spectrum by a $\sigma_p$-small perturbation.
			
\begin{proposition}\label{MProp}
  Given $A\in\mathcal{K}^1$ and $\varepsilon,\delta>0$, there exists $B\in\mathcal{K}^1$ such that $\sigma_1(A,B)<\varepsilon$ and 
 \begin{equation}\label{tstat}
  \lambda_1(B)\leq \frac{\lambda_1(A)+\lambda_2(A)}2+\delta.
  \end{equation}
\end{proposition}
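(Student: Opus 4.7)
The plan is a Ma\~n\'e--Bochi interchange argument adapted to the $L^p$/kinetic setting. If $\lambda_1(A)=\lambda_2(A)$ set $B=A$; otherwise let $\mathbb{R}^2_\omega=E^1_\omega\oplus E^2_\omega$ be the Oseledets splitting of $A$ with $\lambda_1(A)>\lambda_2(A)$. The strategy is to build an $L^1$-small kinetic perturbation $B$ that, along each orbit, periodically rotates the fast $A$-direction $E^1$ onto the slow direction $E^2$; the resulting $\Phi_B$-orbit of any vector then spends comparable time growing at rates $\lambda_1(A)$ and $\lambda_2(A)$, forcing $\lambda_1(B)$ down to the average.

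The first step is to fix scales. By Lusin applied to Oseledets, choose a compact set $K\subset M$ of $\mu$-measure close to $1$ on which $\omega\mapsto E^i_\omega$ is continuous, the angle $\measuredangle(E^1_\omega,E^2_\omega)$ is uniformly bounded below, and the convergences $\tfrac1t\log\|\Phi_A(t,\omega)|_{E^i_\omega}\|\to\lambda_i(A)$ are uniform on $K$. Writing $\varphi^t$ as a special flow over $(\Sigma,\mathcal{T},h,\tilde\mu)$, and with $a\gg 1$ to be tuned in terms of $\delta$, select a measurable subsection $\Sigma_0\subset\Sigma\cap K$ with return time $h\geq a+1$ and $\tilde\mu(\Sigma_0)>0$ small. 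For each $\tilde\omega\in\Sigma_0$, apply Lemma~\ref{rot4cont} (as prescribed in Remark~\ref{maisfrente}) with $u$ spanning $E^1_{\varphi^a(\tilde\omega)}$ and $v$ spanning $E^2_{\varphi^{a+1}(\tilde\omega)}$ to construct a kinetic perturbation $B_{\tilde\omega}$ on $\varphi^{[0,1]}(\varphi^a(\tilde\omega))$ with $\|B_{\tilde\omega}\|\leq 4\pi^2$ and $\Phi_{B_{\tilde\omega}}(1,\cdot)E^1_{\varphi^a(\tilde\omega)}=E^2_{\varphi^{a+1}(\tilde\omega)}$. Glue them into a global kinetic perturbation $B\in\mathcal{K}^1$ on the flowbox $\varphi^{[a,a+1]}(\Sigma_0)$ via~\eqref{perturbacao global}; by Lemma~\ref{rot4pert}, shrinking $\tilde\mu(\Sigma_0)$ secures $\sigma_1(A,B)<\varepsilon$.

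The estimate on $\lambda_1(B)$ then combines two observations. First, every kinetic generator satisfies $\tr=-\alpha$, so Abel--Liouville gives $\det\Phi_C(t,\omega)=\exp\bigl(-\int_0^t\alpha_C(\varphi^s\omega)\,ds\bigr)$ and hence, by ergodicity, $\lambda_1(C)+\lambda_2(C)=-\int_M\alpha_C\,d\mu$ for every $C\in\mathcal{K}^1$; since $\alpha_B$ differs from $\alpha_A$ only on the small flowbox, the two sums of exponents differ by at most $\delta$. Second, tracking a unit vector $w\in E^1_{\tilde\omega}$ under $\Phi_B$: across one tower passage of length $\approx a+1$, $w$ grows by $\approx e^{a\lambda_1(A)}$ in $E^1$, is mapped into $E^2_{\varphi^{a+1}(\tilde\omega)}$ with bounded distortion $Ce^{4\pi^2}$ (Remark~\ref{maisfrente}), and then grows by $\approx e^{a\lambda_2(A)}$ in $E^2$ before the next visit. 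Averaging via Birkhoff's ergodic theorem over many passages yields, for $a$ large, that the top $B$-exponent of $w$ is at most $\tfrac12(\lambda_1(A)+\lambda_2(A))+\delta$; combined with the trace estimate this gives~\eqref{tstat}.

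The hardest part is this final averaging. One must carefully handle directions transverse to $E^1_{\tilde\omega}$ (verifying that the explicit elliptic rotation $\Phi_P$ of Lemma~\ref{rot4cont} also brings a neighbourhood of $E^2$ back towards $E^1$, so that the swap iterates coherently along the orbit), control the frequency of flowbox visits via Birkhoff, and combine these with the uniform Oseledets data on $K$ to pass from a.e.\ asymptotic rates to time-averaged growth on finite horizons. The joint calibration of $a$ large (so that $a\lambda_i(A)$ absorbs the $O(1)$ rotational overhead $Ce^{4\pi^2}$) and $\tilde\mu(\Sigma_0)$ small (so that $\sigma_1(A,B)$ and the perturbation of $\int\alpha$ stay within $\varepsilon$ and $\delta$) is what balances all error terms.
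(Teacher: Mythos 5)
Your construction (Rudolph's special-flow coding, a flowbox $\varphi^{[a,a+1]}(\Sigma_0)$ of small $\tilde\mu$-measure, the elliptic rotation of Lemma~\ref{rot4cont} sending $E^1_{\varphi^a(\tilde\omega)}$ onto $E^2_{\varphi^{a+1}(\tilde\omega)}$, and Lemma~\ref{rot4pert} to keep $\sigma_1(A,B)<\varepsilon$) matches the paper's almost exactly. The difference is entirely in how one passes from the construction to the bound on $\lambda_1(B)$, and this is where the proposal has a genuine gap.

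The paper bounds the \emph{operator norm} of $\Phi_B$ over a \emph{single} tower passage, writing $\Phi_B(N,\omega)=\Phi_A(N/2-1,\omega')\cdot\Phi_{B_\omega}(1,\varphi^{N/2}\omega)\cdot\Phi_A(N/2,\omega)$ in Oseledets coordinates and observing that the antidiagonal middle factor forces every matrix entry of the product to be of size $\lesssim\mathrm{e}^{\frac{\lambda_1+\lambda_2}{2}N}$, then invokes the subadditive characterization~\eqref{exp via inf} to get $\lambda_1(B)\leq\frac1N\log\|\Phi_B(N,\omega)\|$. Crucially, one never has to track a vector past the perturbation window. Your plan instead fixes a specific $w\in E^1_{\tilde\omega}(A)$ and follows it over \emph{many} passages, hoping Birkhoff will average $\lambda_1(A)$ and $\lambda_2(A)$ along the way. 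This runs into two problems you do not resolve. First, after the rotation $w$ lies in $E^2(A)$, which is $\Phi_A$-invariant, so until the next visit to $\Sigma_0$ it simply stays in $E^2$; at that next visit the constructed rotation sends $E^1$ to $E^2$, not $E^2$ to $E^1$, so the ``swap'' does \emph{not} iterate coherently — you flag this yourself as ``the hardest part'' but leave it unproved, and with the explicit generator $P$ of Lemma~\ref{rot4cont} it is in fact not automatic. Second, even if you could control the orbit of $w$, a bound on the growth rate of one vector only bounds $\lambda_1(B)$ if $w\notin E^2_\omega(B)$; combined with your (correct and nice) Abel--Liouville identity $\lambda_1(C)+\lambda_2(C)=-\int_M\alpha_C\,d\mu$, a bound $\lim\frac1t\log\|\Phi_B(t,\omega)w\|\leq\frac{\lambda_1+\lambda_2}{2}+\delta$ is equally compatible with that limit being $\lambda_2(B)$, which gives only a \emph{lower} bound $\lambda_1(B)\geq\frac{\lambda_1+\lambda_2}{2}-\delta$. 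You would need an extra argument showing that $E^1_{\tilde\omega}(A)$ is generically transverse to $E^2_{\tilde\omega}(B)$.

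In short: the perturbation is right, the trace identity is a genuine (and unused-by-the-paper) observation, but the asymptotic estimate by vector-tracking over many passages is the wrong tool here. Replacing it by the paper's one-block operator-norm estimate together with the subadditive infimum removes the need both for coherent iteration of the swap and for the genericity of the tracked vector.
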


\begin{proof}
Let $A\in\mathcal{K}^1$ and $\varepsilon,\delta>0$ be given. We assume that $A$ has simple spectrum, otherwise the conclusion is trivial. We are going to perform perturbations supported on a segment of size $1$ (cf. Lemma~\ref{rot4cont}) in points $\w$ of a subset $\Sigma_0$ of positive measure, with the perturbation to be taken approximately at $\varphi^{N_0/2}(\w)$, where $N_0$ is large enough to fulfill the asymptotic properties of Oseletets' theorem, up to some given accuracy $\eta$, both for $\w$ and $\varphi^{\frac{N_0}{2}+1}(\w)$. To avoid unpleasant overlapping situations, we codify $\varphi$ by a special flow along a Kakutani castle with a finite roof function. For a better understanding and since the flow $\varphi$ is ergodic a simple application of Rudolph's two symbol code representation (see \cite{Rudo}) allows us to consider a special flow $S^t$ with basis $\Sigma\subset M$ and a roof function $h\colon \Sigma\to \{N_0+1,N_0+q\}$, for some large $N_0$ and some $1<q\in \mathbb{R}\setminus \mathbb{Q}$. We recall that Rudolph's theorem gives a step function of irrationally independent high levels, and we need \emph{time} enough to \emph{see} the Lyapunov exponent in time $N_0/2$ (with accuracy $\eta$), to perform the perturbation in time less than $1$, and so \emph{see} again the Lyapunov exponent in time $N_0/2$ before the return to the basis. In this way we prevent overlaps of the perturbations.  Moreover, we have a decomposition of $\mu=\tilde\mu\times\ell$, where $\ell$ is the time length and $\tilde\mu$ is a measure on $\Sigma$ invariat by the return map. By abuse of notation in the following we still denote the special flow by $\varphi^t$.
Up to some rearrangement of the castle, given  $\eta>0$ there is $N\in\N$ ($N\geq N_0$) such that by the Oseledets theorem there is a subset $ \tilde{\Sigma}\subseteq \Sigma$, with $\tilde\mu(\tilde{\Sigma})>0$, such that for every $\w\in \tilde{\Sigma}$ we have:
     \begin{equation}\label{eta}
       \left|\lambda_i-\frac1t\log\left\|\Phi_A(t,\w)\vert_{E_\w^i}\right\|\right|<\eta/8<\eta
     \end{equation}
       for $i=1,2$ and $t\geq N/2$. Notice that from the cocycle property, by taking $N$ larger if necessary, for $\w\in \tilde{\Sigma}$ and setting $\w'=\varphi^{N/2+1}(\w)$, we also have 
      \begin{equation}\label{eta2}
        \left|\lambda_i-\frac1{N/2-1}\log\left\|\Phi_A(N/2-1,\w')\vert_{E_{\w'}^i}\right\|\right|<\eta.
      \end{equation}
Indeed, to obtain \eqref{eta2} we notice that
{\footnotesize\begin{align*}
\left|\lambda_i-\frac1{N/2-1}\log\left\|\Phi_A(N/2-1,\w')\vert_{E_{\w'}^i}\right\|\right| 
&= \left|\lambda_i-\frac1{N/2-1}\log\left\|\Phi_A(N,\w)\vert_{E_\w^i}\Phi_A(-(N/2+1),\w')\vert_{E_{\w'}^i}\right\|\right|
\\
 &\leq \left|2\lambda_i-\frac{N}{N/2-1}\frac1N\log\left\|\Phi_A(N,\w)\vert_{E_\w^i}\right\|\right| \\
&+  \left|-\lambda_i- \frac{N/2+1}{N/2-1}\frac{1}{N/2+1}\log\left\|\Phi_A(N/2+1,\w)^{-1}\vert_{E_{\w'}^i}\right\|\right|.
\end{align*}       }
The first term is less or equal than
     \begin{align*}
\left|2-\frac{N}{N/2-1}\right||\lambda_i|+\left|\lambda_i-\frac1N\log\|\Phi_A(N,\w)\vert_{E_\w^i}\right| \left|\frac{N}{N/2-1}\right|,
\end{align*}    
which can be made smaller than $\eta/2$ for sufficiently large $N$, as well the second term because is less or equal than
 \begin{align*}
\left|1-\frac{N/2+1}{N/2-1}\right||\lambda_i|+\left|\lambda_i-\frac{1}{N/2+1}\log\|\Phi_A(N/2+1,\w)\vert_{E_\w^i}\right| \left|\frac{N/2+1}{N/2-1}\right|.
\end{align*}
since
\[\left|-\lambda_i- \frac1{N/2-1}\log\|\Phi_A(N/2+1,\w)^{-1}\vert_{E_{\w'}^i}\|\right|=\left|-\lambda_i+ \frac1{N/2-1}\log\left\|\Phi_A(N/2+1,\w)\vert_{E_{\w'}^i}\right\|\right|.\]

For every  $\tilde\w\in \tilde{\Sigma}$ let $B_{\tilde\w}$ be the perturbation of $A$ by $P$, depending on $\mathbb{R}u=E_{\varphi^{N/2}(\tilde\w)}^1$ and $\mathbb{R}v=E_{\varphi^{1+N/2}(\tilde\w)}^2$, as in \eqref{infgen} given by Lemma~\ref{rot4cont} supported on $\varphi^{[N/2,N/2+1]}(\tilde\w)$  such that we have
    \[
    \Phi_{B_{\tilde\w}}(1,\varphi^{N/2}(\tilde\w))E_{\varphi^{N/2}(\tilde\w)}^1=E_{\varphi^{1+N/2}(\tilde\w)}^2.
     \]
     We may consider now a global perturbation $B$ of $A$ supported on the flowbox $\varphi^{[N/2,N/2+1]}(\tilde{\Sigma})$ as before: for $\w\in \varphi^{[N/2,N/2+1]}(\tilde{\Sigma})$ we project $\w$ in $\tilde\w\in\tilde\Sigma$ and define
\begin{equation*}
		B(\w):=\left\{\begin{array}{lll}A(\w),\,&&\text{if}\,\, \w\notin  \varphi^{[N/2,N/2+1]}(\tilde{\Sigma})\\   B_{\tilde \w}(\w),\,&&\text{if}\,\, \w\in  \varphi^{[N/2,N/2+1]}(\tilde{\Sigma})         \\    \end{array}\right..
	\end{equation*}
Notice that we have $B\in\mathcal{K}^1$. By passing to a subset of $\tilde{\Sigma}$ if necessary, from Lemma~\ref{rot4pert} we may assume that $\sigma_1(A,B)<\varepsilon$.

We will see that we may choose a large $N$ depending on $\eta$ (depending on $\delta$) such that for all $\w\in \tilde{\Sigma}$ we have
    \begin{equation}\label{eq:dec lyap exp}\lambda_1(B,\w)\leq \frac{\lambda_1(A,\w)+\lambda_2(A,\w)}2+\delta.\end{equation}
   Having $\tilde\mu(\tilde{\Sigma})>0$  implies that~\eqref{eq:dec lyap exp} holds for all $\w$ in a subset $\tilde M\subseteq M_h$ with $\mu(\tilde M)>0$, and since we are assuming that the flow $\varphi^t$ is $\mu$-ergodic, we have
     \[\lambda_1(B)\leq \frac{\lambda_1(A)+\lambda_2(A)}2+\delta\]
     as required in \eqref{tstat}.  Let us explain  the effect of mixing Oseledets' directions on the decay of the Lyapunov exponents.
Fiz $\eta>0$ to be choosed later and take $\omega\in \tilde{\Sigma}$ with Oseledets directions $E^1_\omega\oplus E^2_\omega$ and $N>0$ sufficiently large in order to have \eqref{eta} and \eqref{eta2} for $\omega^\prime=\varphi^{N/2+1}(\omega)$, that is
\begin{equation}\label{steps}
\Phi_A\left(N/2,\omega\right)\cdot v^1_\omega\approx \text{e}^{\lambda_1\frac{N}{2}},\Phi_A\left(N/2,\omega\right)\cdot v^2_\omega\approx \text{e}^{\lambda_2\frac{N}{2}},\\
 \end{equation}
 and
 \begin{equation}\label{steps2}
\Phi_A\left({N}/{2}-1,\omega^\prime\right)\cdot v^1_{\omega^\prime}\approx \text{e}^{\lambda_1(\frac{N}2-1)},\Phi_A\left(N/2-1,\omega^\prime\right)\cdot v^2_{\omega^\prime}\approx \text{e}^{\lambda_2(\frac{N}2-1)},
 \end{equation}
	where $\lambda_1=\lambda_1(A)$ and $\lambda_2=\lambda_2(A)$ are the Lyapunov exponents of $A$, $v_{\w}^i\in E^i_{\w}$ and $v_{\w'}^i\in E^i_{\w'}$ are unitary vectors, and $\approx$ means $\eta$-close as in \eqref{eta} and \eqref{eta2}.

Notice that, for all $\w\in \tilde{\Sigma}$ we have:
\begin{equation}\label{three}
\Phi_B(N,\omega)=\Phi_A\left(N/2-1,\omega'\right)\cdot\Phi_{B_\w}(1,\varphi^{N/2}(\w))\cdot \Phi_A\left({N}/{2},\omega\right).
\end{equation}
So, for all $\w\in \tilde{\Sigma}$, we may decompose the action of the map $\Phi_B({N},\omega)$ in three pieces:
\begin{itemize}
\item [\textbullet] The first, between $\omega$ and $\varphi^{N/2}(\omega)$, and the  third, between $\w'$ and $\varphi^{N}(\omega)$, that, using the basis induced by the Oseledets directions with respect to the splitting $E^1\oplus E^2$, may be writen as
	$$\Phi_A\left(N/2,\omega\right)=\begin{pmatrix}   \phi_{1,1}\left(N/2,\omega\right) & 0 \\0 & \phi_{2,2}\left(N/2,\omega\right)
\end{pmatrix}$$ \:\text{and}\: $$\Phi_A\left(N/2,\omega'\right)=\begin{pmatrix}\phi_{1,1}\left(N/2-1,\omega'\right) & 0 \\0 & \phi_{2,2}\left(N/2-1,\omega'\right)
\end{pmatrix}.$$
Using \eqref{steps} and \eqref{steps2} we get that $\Phi_A\left(N/2,\omega\right)$ is an operator that can be represented approximately (with $\left|\phi_{1,1}- \text{e}^{\frac{\lambda_1}{2}N}\right|<\eta$ and $\left|\phi_{2,2}- \text{e}^{\frac{\lambda_2}{2}N}\right|<\eta$) by the matrix
\begin{equation}\label{three1}
\begin{pmatrix}
\text{e}^{\frac{\lambda_1}{2}N} & 0 \\ 0 & \text{e}^{\frac{\lambda_2}{2}N}
\end{pmatrix}
\end{equation}
written, as usual, in the Oseledets basis $\{v_\w^1,v_\w^2\}$.
Similarly, $\Phi_A(N/2,\omega')$ can be represented approximately (with $\left|\phi_{1,1}- \text{e}^{\lambda_1(N/2-1)}\right|<\eta$ and $\left|\phi_{2,2}- \text{e}^{\lambda_2(N/2-1)}\right|<\eta$) by the matrix
\begin{equation}\label{three3}
\begin{pmatrix}\text{e}^{\lambda_1(N/2-1)} & 0 \\0 & \text{e}^{\lambda_2(N/2-1)}
\end{pmatrix}
\end{equation}
in the Oseledets basis $\left\{v_{\w'}^1,v_{\w'}^2\right\}$.

\item [\textbullet] The second piece, between $\varphi^{N/2}(\w)$ and $\omega^\prime$, with matrix in the basis induced by the Oseledets directions with respect to the splitting $E^1_{\varphi^{N/2}(\w)}\oplus E^2_{\varphi^{N/2}(\w)}$ which we denote by:

\begin{equation}\label{three2}
\Phi_{B_{\w}}(1,\varphi^{N/2}(\omega))=\begin{pmatrix} 0 & a_{1,2} \\ a_{2,1} & a_{2,2} \end{pmatrix}.\end{equation}

	\end{itemize}
	 With this interchange of directions the largest grow for $\Phi_A(N,\w)$ (given by $\text{e}^{\lambda_1 N}$) can never be achieved for the perturbation $\Phi_B(N,\w)$. Indeed, the entries of $\Phi_B(N,\w)$ are bounded by:
	 $$\max\left\{\text{e}^{\lambda_1(N/2-1)}\text{e}^{\frac{\lambda_2}{2}N},e^{\lambda_2(N/2-1)}\text{e}^{\frac{\lambda_1}{2}N}, \text{e}^{\frac{\lambda_2}{2}N}\text{e}^{\lambda_2(N/2-1)}\right\}\times\max\left\{a_{1,2}, a_{2,1}, a_{2,2} \right\}$$
	  We now estimate  $\frac{1}{N}\log\|\Phi_B(N,\w)\|$. By \eqref{three} we know that $\Phi_B(N,\w)$ is the composition of the matrices \eqref{three1}, \eqref{three2} and \eqref{three3}. For the sake of simplicity of presentation we estimate $\|\Phi_B(N,\w)\|$ using these three matrices which are in Oseledets basis. However, since the angle between the Oseledets fibers has subexponential decay as in~\eqref{angle}, estimating $\|\Phi_B(N,\w)\|$ by considering the canonical basis only increases the technicalities which we avoid in the sequel. Taking this into consideration and also Remark~\ref{maisfrente} we can ensure that:
	 $$\|\Phi_B(N,\w)\|\leq \max\left\{\text{e}^{\frac{\lambda_1+\lambda_2}{2}N-\lambda_1},\text{e}^{\frac{\lambda_1+\lambda_2}{2}N-\lambda_2}\right\}\,C\text{e}^{4\pi^2}.$$
	 Therefore,
	 $$\frac{1}{N}\log \|\Phi_B(N,\w)\|\leq \max\left\{\frac{\lambda_1+\lambda_2}{2}-\frac{\lambda_1}{N},\frac{\lambda_1+\lambda_2}{2}-\frac{\lambda_2}{N}\right\}+\frac{\log C\text{e}^{4\pi^2}}{N}.$$
Finally, taking $N$ sufficiently large we obtain
\begin{equation*}%\label{useinf}
 \frac{1}{N}\log\|\Phi_B(N,\omega)\|\leq\frac{\lambda_1(A,\w)+\lambda_2(A,\w)}{2}+\delta.
\end{equation*}
By (\ref{exp via inf}) we have
	
\[
\begin{split}
   \lambda_1(B,\w) & = \lim_{t\to\infty}\frac{1}{t}\log\|\Phi_B(t,\omega)\|   = \lim_{n\to\infty}\frac{1}{n}\log\|\Phi_B(n,\omega)\|  = \underset{n}{\inf}\,\frac{1}{n}\log\|\Phi_B(n,\omega)\| \\
   & \leq \frac{1}{N}\log\|\Phi_B(N,\omega)\|  \leq\frac{\lambda_1(A,\w)+\lambda_2(A,\w)}{2}+\delta
\end{split}
\]
and \eqref{eq:dec lyap exp} is proved.

\end{proof}

	\medskip
		
\section{Upper semi-continuity for the top Lyapunov exponent}\label{usc}

	\subsection{On $L^1$-continuous dependence results} \label{cdres}
In this subsection we provide some preliminary results to achieve the upper-semicontinuity for the top Lyapunov exponent. We start with a basic measure-theoretical result~that will be instrumental.

\begin{lemma}\label{funcion f}
Let $f\in L^{1}(\mu)$, $f\geq0$. For all	$\eta>0$ exists $K>0$ such that for all $h\in L^{1}(\mu)$ with $h\geq0$ and $\|h-f\|_{1}<\eta$, we have
\begin{equation*}
\int_{\{h>K\}}\!\!\!h\: d\mu(\w)<2\eta\qquad\text{ and }\qquad \mu(\{h>K\})<\frac{2\eta}{K}.
\end{equation*}
\end{lemma}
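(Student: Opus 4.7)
The strategy is to combine Markov's inequality (which controls the measure of the tail set $\{h>K\}$ \emph{uniformly in $h$}) with the absolute continuity of the integral of $f$ (which converts smallness of measure into smallness of the integral of $f$). The second stated inequality will then fall out of a second application of Markov once the first inequality has been established, so the whole content of the lemma is the estimate on $\int_{\{h>K\}} h\, d\mu$.

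First I would note that the hypothesis $\|h-f\|_1 < \eta$ yields the $h$-uniform $L^1$-bound $\|h\|_1 \leq \|f\|_1 + \eta$. Hence Markov's inequality gives
\[
\mu(\{h>K\}) \;\leq\; \frac{\|h\|_1}{K} \;\leq\; \frac{\|f\|_1+\eta}{K},
\]
and this can be made as small as desired by enlarging $K$, regardless of which admissible $h$ is considered.

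Second, because $f\in L^1(\mu)$, the set function $A\mapsto\int_A f\, d\mu$ is absolutely continuous with respect to $\mu$: for the given $\eta$ there exists $\delta>0$ (depending on $f$ and $\eta$ only) such that $\mu(A)<\delta$ implies $\int_A f\, d\mu < \eta/2$. The plan is then to fix $K$ large enough that $(\|f\|_1+\eta)/K<\delta$; with this choice, every admissible $h$ satisfies $\mu(\{h>K\})<\delta$ and therefore $\int_{\{h>K\}} f\, d\mu < \eta/2$. Splitting $h=(h-f)+f$ on the tail gives
\[
\int_{\{h>K\}} h\, d\mu \;\leq\; \int_{\{h>K\}} |h-f|\, d\mu + \int_{\{h>K\}} f\, d\mu \;<\; \eta + \tfrac{\eta}{2} \;<\; 2\eta,
\]
which is the first inequality. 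Applying Markov once more, $K\mu(\{h>K\})\leq\int_{\{h>K\}} h\, d\mu < 2\eta$, so $\mu(\{h>K\})<2\eta/K$, as required.

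The only subtlety—hardly an obstacle—is the order in which the parameters are chosen: one must first fix $\eta$, then extract $\delta$ from the absolute continuity of $\int f$, and only afterwards choose $K$. Since neither $\delta$ nor the resulting $K$ depends on $h$, the constant $K$ works uniformly over the $\eta$-ball around $f$, which is exactly what the statement demands.
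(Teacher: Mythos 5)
Your proof is correct and self-contained: the uniform bound $\|h\|_1\le\|f\|_1+\eta$, Markov's inequality to control $\mu(\{h>K\})$, the absolute continuity of $A\mapsto\int_A f\,d\mu$ to shrink $\int_{\{h>K\}}f$, the triangle-inequality split $\int_{\{h>K\}}h\le\|h-f\|_1+\int_{\{h>K\}}f$, and a second application of Markov all fit together with the quantifiers in the right order, and the constant $K$ you produce depends only on $f$ and $\eta$, as required. The paper itself does not present an argument for this lemma — it simply cites \cite[Lemma 5]{AB} — but the route you follow is the natural one for such tail estimates and is, as far as I can tell, the same argument used in that reference.
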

\begin{proof}\cite[Lemma 5]{AB}.
\end{proof}
Throghout this subsection we consider $A,B\in \mathcal{G}^1$, $\epsilon>0$ and $T\in\mathbb{N}$. Let us define
\begin{equation}\label{fg}
f(\w)=\int_0^1\|A(\varphi^s(\w))\|\,ds\quad\text{ and }\quad g(\w)=\int_0^1\|B(\varphi^s(\w))\|\,ds.
\end{equation}
As we already said, by \cite[Lemma 2.2.5]{A}  we have that $f,g \in L^1(\mu)$. We set $\eta=\varepsilon/(16(T+2))$ and fix $B$ satisfying  $\hat\sigma_p(A,B)<\eta$, which also implies $\|f-g\|_1<\eta$. We use now Lemma \ref{funcion f} for $f$ and $h=g$ which gives us $K>0$ (depending on $\eta$ and $A$). Let
\begin{equation}\label{fg2}
E_{f}=\{f\leq K\}\quad\text{ and }\quad E_{g}=\{g\leq K\}.
\end{equation}
Then Lemma \ref{funcion f} gives that
\begin{equation}\label{funcion f2}
\int_{E^{c}_{h}}h\:d\mu(\w)<2\eta\quad\text{and}\quad\mu(E^{c}_{h})<\frac{2\eta}{K},\quad\text{for}\quad h=f, g.
\end{equation}
Set
\begin{equation}\label{defG}
G=\bigcap\limits^{T-1}_{i=0}\varphi^{-i}(E_{f}\cap E_{g}).
\end{equation}
Then $G^{c}$ has \emph{small measure}: \begin{equation}\label{mu(G)}
\begin{split}
\mu(G^{c})&\leq\sum_{i=0}^{T}\mu(\varphi^{-i}(E^{}_{f}\cup E^{}_{g})^{c}) \leq T\mu(E^{}_{f}\cup E^{}_{g})^{c}\leq T\frac{4\eta}{K}<\frac{\varepsilon}{4K}.
\end{split}
\end{equation}

Elements belonging to the set $G$ defined in \eqref{defG} have nice estimates. Indeed we have:

\begin{lemma}\label{leminha}
If $\w\in G$, then for $C=A,B$ we have $\int_0^T\|C(\varphi^s(\w))\|\,ds\leq TK$ and $\|\Phi_C(T,\w)\|\leq \text{e}^{TK}$.
\end{lemma}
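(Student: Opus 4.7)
The plan is to exploit directly the fact that membership in $G$ provides uniform pointwise control of $f$ and $g$ along the first $T$ iterates of $\varphi$, and then invoke Grönwall's inequality (already established as \eqref{Gronwall}) to turn the integral bound on $\|C(\varphi^s(\w))\|$ into a bound on $\|\Phi_C(T,\w)\|$.

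First I would unpack the hypothesis $\w\in G$. By the definition \eqref{defG}, this means $\varphi^i(\w)\in E_f\cap E_g$ for every $i=0,1,\dots,T-1$. Recalling the definitions \eqref{fg} and \eqref{fg2}, this is equivalent to
\[
f(\varphi^i(\w))=\int_0^1\|A(\varphi^{s+i}(\w))\|\,ds\leq K\quad\text{and}\quad g(\varphi^i(\w))=\int_0^1\|B(\varphi^{s+i}(\w))\|\,ds\leq K
\]
for each such $i$.

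Next, I would split the integral on $[0,T]$ into unit pieces: for $C=A$ or $C=B$,
\[
\int_0^T\|C(\varphi^s(\w))\|\,ds=\sum_{i=0}^{T-1}\int_i^{i+1}\|C(\varphi^s(\w))\|\,ds=\sum_{i=0}^{T-1}\int_0^{1}\|C(\varphi^{s+i}(\w))\|\,ds,
\]
after the change of variable $s\mapsto s+i$ and using $\varphi^{s+i}=\varphi^s\circ\varphi^i$. Each summand is bounded by $K$ by the previous paragraph, yielding the first claim $\int_0^T\|C(\varphi^s(\w))\|\,ds\leq TK$.

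Finally, for the operator norm bound, I apply Grönwall's inequality in the form \eqref{Gronwall} to the cocycle $\Phi_C$, obtaining
\[
\|\Phi_C(T,\w)\|\leq \exp\!\left(\int_0^T\|C(\varphi^s(\w))\|\,ds\right)\leq \exp(TK)=\text{e}^{TK},
\]
which is the second claim. There is no real obstacle here: the only substantive ingredients are the definition of $G$, Fubini-style splitting of the integral, and the Grönwall estimate already recorded earlier in the paper.
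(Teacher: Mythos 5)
Your proof is correct and follows essentially the same route as the paper: unpack the definition of $G$ to bound each unit-time piece of the integral by $K$ via \eqref{fg2}, sum over $i=0,\dots,T-1$ to get $TK$, and then invoke the Grönwall estimate (the paper cites \eqref{eqIC3}, which is the same bound recorded via \eqref{Gronwall}) to pass to $\|\Phi_C(T,\w)\|\leq \mathrm{e}^{TK}$.
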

\begin{proof}
Once $\w\in G$ we have $\varphi^i(\w)\in E_{f}$ for all $i=0,\ldots,T$, and $$\int_0^T\|A(\varphi^s(\w))\|\,ds=\sum_{i=0}^{T-1}\int_0^{1}\|A(\varphi^{s+i}(\w))\|\,ds\leq TK.$$ By \eqref{eqIC3} we get
$\log^+\|\Phi_{A}(T,\omega)\|\leq\int_{0}^{T}\|B(\varphi^{s}(\omega))\|\,ds$ and we are done. The other case is similar.
\end{proof}

Next result gives us a `weak form' of continuous dependence of the solutions on its infinitesimal generator which will be enough for our purposes. The dependence will have an $L^p$ flavour in the sense that the estimate on the separation of the two solutions will be on a set, despite huge in measure, not exactly the whole $M$ and the reason we call `weak'.

\begin{lemma}\label{contdep2}
For all $1\leq p < \infty$,
$$\left(\int_{G}\|\Phi_A(1,\w)-\Phi_B(1,\w)\|^p\,d\mu(\w)\right)^{\frac{1}{p}}\leq \text{e}^{2Kp}\hat\sigma_p(A,B).$$
\end{lemma}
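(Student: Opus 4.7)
The plan is to obtain a pointwise Gr\"onwall-type control of $\|\Phi_A(1,\omega)-\Phi_B(1,\omega)\|$ on $G$, and then integrate the resulting bound raised to the $p$-th power by combining Jensen's inequality, Fubini, and the $\varphi$-invariance of $\mu$.

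\emph{First}, I would subtract the Volterra equations \eqref{eq:LDS} written for $\Phi_A$ and $\Phi_B$ and split the integrand by the algebraic identity $A\Phi_A - B\Phi_B = A(\Phi_A-\Phi_B)+(A-B)\Phi_B$. Setting
\[
h(t,\omega):=\int_0^t \|(A-B)(\varphi^s(\omega))\|\,\|\Phi_B(s,\omega)\|\,ds,
\]
this produces the integral inequality
\[
\|\Phi_A(t,\omega)-\Phi_B(t,\omega)\| \;\leq\; \int_0^t \|A(\varphi^s(\omega))\|\,\|\Phi_A(s,\omega)-\Phi_B(s,\omega)\|\,ds + h(t,\omega).
\]
Since $h(\cdot,\omega)$ is non-decreasing, Gr\"onwall's inequality (as in \eqref{Gronwall}) gives
\[
\|\Phi_A(1,\omega)-\Phi_B(1,\omega)\| \;\leq\; h(1,\omega)\,\exp\!\Bigl(\int_0^1 \|A(\varphi^s(\omega))\|\,ds\Bigr).
\]

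\emph{Second}, for $\omega\in G$ I would invoke Lemma~\ref{leminha} with $T=1$ to get the uniform controls $\int_0^1\|A(\varphi^s(\omega))\|\,ds\leq K$ and $\|\Phi_B(s,\omega)\|\leq e^{sK}\leq e^K$ for all $s\in[0,1]$. These turn the previous bound into the clean pointwise estimate
\[
\|\Phi_A(1,\omega)-\Phi_B(1,\omega)\| \;\leq\; e^{2K}\int_0^1 \|(A-B)(\varphi^s(\omega))\|\,ds, \qquad \omega\in G.
\]

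\emph{Third}, I raise this pointwise inequality to the $p$-th power and apply Jensen's inequality to the probability measure $ds$ on $[0,1]$ to pass the $p$-th power inside the $s$-integral; then Fubini and the $\varphi^s$-invariance of $\mu$ give
\[
\int_G \|\Phi_A(1,\omega)-\Phi_B(1,\omega)\|^p\,d\mu(\omega) \;\leq\; e^{2Kp}\int_0^1\!\!\int_M \|(A-B)(\omega)\|^p\,d\mu(\omega)\,ds \;=\; e^{2Kp}\,\hat\sigma_p(A,B)^p.
\]
Taking $p$-th roots yields the claimed bound (with, in fact, the slightly sharper constant $e^{2K}\leq e^{2Kp}$).

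The main subtlety I anticipate is the correct use of Gr\"onwall with the inhomogeneous term $h$ together with the need to confine the pointwise estimate to $G$: outside $G$ one has no uniform bound on $\int_0^1\|A(\varphi^s(\omega))\|\,ds$ nor on $\|\Phi_B(s,\omega)\|$, so the exponential factor would blow up and a global $L^p$ statement on $M$ would fail. The introduction of $G$ via the truncation levels $f\leq K$ and $g\leq K$ is exactly what makes both factors uniformly controllable; the rest is routine Jensen--Fubini bookkeeping.
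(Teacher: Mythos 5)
Your proposal is correct and follows essentially the same route as the paper's own proof: the same decomposition $A\Phi_A - B\Phi_B = A(\Phi_A-\Phi_B)+(A-B)\Phi_B$ of the subtracted Volterra equations, the same Gr\"onwall step with the inhomogeneous term, the same use of Lemma~\ref{leminha} on $G$ to control $\int_0^1\|A\|\,ds$ and $\|\Phi_B(s,\cdot)\|$, and the same Jensen--Fubini--invariance computation at the end. Your parenthetical remark that the argument in fact yields the sharper constant $e^{2K}\leq e^{2Kp}$ after taking the $p$-th root is accurate and is also what the paper's own computation delivers, even though the lemma is stated with $e^{2Kp}$.
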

\begin{proof}
For $\w\in G$ we have
\begin{eqnarray*}
\|\Phi_{A}(1,\omega)-\Phi_{B}(1,\omega)\|&\leq&\int\limits_{0}^{1}\underbrace{\|A(\varphi^{s}(\omega)\|}_{\beta(s)}\,\|\Phi_{A}(s,\omega)-\Phi_{B}(s,\omega)\|ds\\
&+&\underbrace{\int\limits_{0}^{1}\left\|A(\varphi^{s}(\omega))-B(\varphi^{s}(\omega))\right\|\,\|\Phi_{B}(s,\omega)\|ds}_{\alpha(t)}.
\end{eqnarray*}
Taking $u(t)=\|\Phi_{A}(t,\omega)-\Phi_{B}(t,\omega)\|$,  we get $u(t)\leq \int_0^t \beta(s)u(s)\,ds+\alpha(t)$.
Using Gr\"onwall's inequality we get $u(t)\leq \alpha(t)\exp \left(\int_0^t \beta(s)\,ds\right)$,
which by Lemma~\ref{leminha} implies 
$$\|\Phi_{A}(1,\omega)-\Phi_{B}(1,\omega)\|\leq \int\limits_{0}^{1}\left\|A(\varphi^{s}(\omega))-B(\varphi^{s}(\omega))\right\|\,\|\Phi_B(s,\w)\|\,ds\,\text{e}^{K}.$$ Since by Lemma~\ref{leminha} we have $\|\Phi_B(s,\w)\|\leq \text{e}^{sK}$, using Jensen's inequality we get
\begin{eqnarray*}
\|\Phi_{A}(1,\omega)-\Phi_{B}(1,\omega)\|^p&\leq& \left(\int\limits_{0}^{1}\left\|A(\varphi^{s}(\omega))-B(\varphi^{s}(\omega))\right\|\,\|\Phi_B(s,\w)\|\,ds\,\text{e}^{K}\right)^p\\
&\leq& \int\limits_{0}^{1}\left\|A(\varphi^{s}(\omega))-B(\varphi^{s}(\omega))\right\|^p\,ds\,\text{e}^{2 K p}.
\end{eqnarray*}
Integrating in both sides and using Fubini and change of coordinates (recalling that $\varphi^s$ preserves $\mu$) we get:
\begin{eqnarray*}
\int_{G}\|\Phi_{A}(1,\omega)-\Phi_{B}(1,\omega)\|^p\,d\mu(\omega)&\leq& \text{e}^{2K p}\int_{G}\int\limits_{0}^{1}\left\|A(\varphi^{s}(\omega))-B(\varphi^{s}(\omega))\right\|^p\,ds\,\,d\mu(\omega)\\
&\leq& \text{e}^{2K p}\int\limits_{0}^{1} \int_{G}\left\|A(\varphi^{s}(\omega))-B(\varphi^{s}(\omega))\right\|^p\,\,d\mu(\omega)\,ds\\
&=& \text{e}^{2K p}\int\limits_{0}^{1} \int_{\varphi^s(G)}\left\|A(\omega)-B(\omega)\right\|^p\,\,d\mu(\omega)\,ds\\
&\leq&\text{e}^{2K p}\int_{M}\left\|A(\omega)-B(\omega)\right\|^p\,\,d\mu(\omega)\\
&=&\text{e}^{2Kp}\hat\sigma_p(A,B).
\end{eqnarray*}
\end{proof}

	\subsection{On the upper semi-continuity of the top Lyapunov exponent function}
	
In this entire subsection we do not assume that the flow $\varphi^t$ is ergodic. We define the function

	\begin{equation*}
\begin{array}{crcl}
\mathscr{L}\colon &(\mathcal{G}^1,\sigma_p) & \longrightarrow & \mathbb{R} \\& A & \longmapsto &  \int_{M}\lambda_{1}(A,\w)\, d\mu(\w).
\end{array}
\end{equation*}
Notice that under the ergodic hypothesis over the flow $\varphi^t$ we have $\mathscr{L}(A)=\lambda_1(A)$. In order to prove  that $\mathscr{L}$ is upper semi-continuous when $\mathcal{G}^1$ is endowed with the $\sigma_p$ metric defined in \S\,\,\,\ref{top}, we begin by given a preliminary result.

\begin{lemma}\label{cont solu}
For all $t\in\mathbb{R}$, $\w\in M$ and $A,B\in\mathcal{G}^1$, we have
		\begin{equation*}%\label{inelog}
		\log^{+}\|\Phi_B(t,\omega)\|\leq \log^{+}\|\Phi_A(t,\omega)\| + \|\Phi_B(t,\omega)-\Phi_A(t,\omega)\|.
		\end{equation*}
\end{lemma}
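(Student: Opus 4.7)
The proof is elementary and relies only on the triangle inequality together with a standard scalar estimate, so my plan is quite direct. First, by the triangle inequality applied to the operator norm,
$$\|\Phi_B(t,\omega)\|\leq \|\Phi_A(t,\omega)\| + \|\Phi_B(t,\omega)-\Phi_A(t,\omega)\|,$$
and since $\log^+$ is non-decreasing on $[0,\infty)$, it suffices to establish the scalar inequality
$$\log^+(a+b)\leq \log^+(a)+b \quad \text{for all } a,b\geq 0,$$
and then apply it with $a=\|\Phi_A(t,\omega)\|$ and $b=\|\Phi_B(t,\omega)-\Phi_A(t,\omega)\|$.

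To verify the scalar inequality I would split into cases according to the value of $a$. If $a+b\leq 1$ the left-hand side vanishes and there is nothing to prove. Otherwise, when $a\geq 1$, the inequality reduces to $\log(1+b/a)\leq b$, which follows from the classical bound $\log(1+x)\leq x$ since $b/a\leq b$. When $a<1$, we use the estimate $a+b<1+b\leq e^{b}$ (because $e^{b}\geq 1+b$), giving $\log(a+b)<b=\log^+(a)+b$, as $\log^+(a)=0$.

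There is no substantive obstacle here, the only minor subtlety is keeping track of which side of $1$ the quantity $a$ sits on so as to apply $\log(1+x)\leq x$ correctly. Once the scalar inequality is in hand the conclusion of the lemma is immediate, and no hypothesis beyond $A,B\in\mathcal{G}^1$ (which guarantees the existence of $\Phi_A$ and $\Phi_B$) is required; in fact the statement holds pointwise in $t$ and $\omega$ without any integrability input.
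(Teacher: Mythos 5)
Your proof is correct and follows exactly the same route as the paper: apply the triangle inequality to the operator norms and then invoke the scalar bound $\log^{+}(x+y)\leq\log^{+}(x)+y$ for $x,y\geq 0$. The only difference is that you spell out the elementary case analysis for the scalar inequality, which the paper simply states as a known fact.
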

	
\begin{proof}
The proof follows straightforwardly from the triangular inequality	
\begin{align*}
\|\Phi_B(t,\w)\|\leq\|\Phi_A(t,\w)\|+\|\Phi_B(t,\w)-\Phi_A(t,\w)\|.
\end{align*}	
and the fact that $\log^{+}(x + y)\leq\log^{+}(x) + y$ for all $x, y\geq0$.
\end{proof}

In the next result the reader will find similarities with the arguments in \cite[Theorem 2]{AB}. Nevertheless, we present novelties namely (i) the topology is planted in the infinitesimal generator of the object which provide the Lyapunov exponent (ii) this point brings several continuity dependency issues to be treated using \S\,\,\,\ref{cdres} and (iii) as we will notice the arguments entail often the idiosyncrasies of the flows and so several adaptations are done accordingly.

\begin{proposition}\label{upper sc}
 For all $1\leq p<\infty$, the function $\mathscr{L}$ is upper semicontinuous when we endow $\mathcal{G}^1$ with the $\sigma_p$-topology, that is, for all $A\in\mathcal{G}^1$ and $\varepsilon>0$ there is $\delta>0$ such that $\sigma_p(A,B)<\delta$ implies $\mathscr{L}(B)<\mathscr{L}(A) + \varepsilon$ .
\end{proposition}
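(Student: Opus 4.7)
The plan is to reduce upper semicontinuity to a comparison at a single, fixed finite time $T$ via a Kingman/Fekete infimum formula, and then to carry out that comparison using the $L^{1}$-continuous-dependence machinery of \S\ref{cdres}, in the spirit of \cite{AB} but adapted to the continuous-time, $L^p$-on-the-infinitesimal-generator setting. Set $a_T(A):=\int_M\log\|\Phi_A(T,\omega)\|\,d\mu$. Submultiplicativity of the operator norm, together with the cocycle identity and the $\varphi^t$-invariance of $\mu$, makes $T\mapsto a_T(A)$ subadditive, and Kingman's theorem yields $\mathscr{L}(A)=\inf_{T\in\mathbb{N}}a_T(A)/T$. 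Given $\varepsilon>0$, I would first pick $T\in\mathbb{N}$ with $a_T(A)/T<\mathscr{L}(A)+\varepsilon/3$. Since the same infimum formula is valid for $B$, it is enough to show that $a_T(B)-a_T(A)<T\varepsilon/3$ whenever $\sigma_p(A,B)$ is sufficiently small.

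With this $T$ fixed, I would import the setup of \S\ref{cdres}: set $\eta=\varepsilon/(32(T+2))$, pick $\delta_0>0$ small enough that $\sigma_p(A,B)<\delta_0$ forces $\hat\sigma_p(A,B)<\eta$, invoke Lemma~\ref{funcion f} to obtain $K=K(\eta,A)$ and the sets $E_f,E_g$, and form $G=\bigcap_{i=0}^{T-1}\varphi^{-i}(E_f\cap E_g)$. Then $\mu(G^c)\leq 4T\eta/K$, and on $G$ one has the two-sided bounds $\|\Phi_C(T,\omega)^{\pm1}\|\leq\mathrm{e}^{TK}$ for both $C=A$ and $C=B$, via Lemma~\ref{leminha} and~\eqref{eqIC3}. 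The key step is then to split $a_T(B)-a_T(A)$ into its integrals over $G$ and $G^c$ and control each piece separately.

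On $G^c$, I would estimate crudely $\log\|\Phi_B\|-\log\|\Phi_A\|\leq \log^+\|\Phi_B\|+\log^+\|\Phi_A^{-1}\|\leq \int_0^T(\|A\|+\|B\|)(\varphi^s\omega)\,ds$, via~\eqref{eqIC3}, decompose $\int_0^T=\sum_{i=0}^{T-1}\int_i^{i+1}$, and split each $\int_{G^c}f(\varphi^i\omega)\,d\mu$ (and the analogous $g$-term) according to whether $\varphi^i\omega\in E_f\cap E_g$ (contribution $\leq K\mu(G^c)$) or not (contribution $\leq 2\eta$, using~\eqref{funcion f2} and $\mu$-invariance). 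Summing over $i$ and over the two generators, the total $G^c$-contribution is bounded by $4T(2T+1)\eta\leq T\varepsilon/6$ by the choice of $\eta$. On $G$ I would use the elementary inequality $\log(\|\Phi_B\|/\|\Phi_A\|)\leq \|\Phi_A-\Phi_B\|/\|\Phi_A\|$ together with $\|\Phi_A(T,\cdot)\|\geq\mathrm{e}^{-TK}$ to get $\int_G\cdots\leq \mathrm{e}^{TK}\int_G\|\Phi_A(T,\cdot)-\Phi_B(T,\cdot)\|\,d\mu$; a telescoping extension of Lemma~\ref{contdep2} from time $1$ to time $T$, via
\[
\Phi_A(T,\omega)-\Phi_B(T,\omega)=\sum_{i=0}^{T-1}\Phi_A(T-i-1,\varphi^{i+1}\omega)\bigl(\Phi_A(1,\varphi^i\omega)-\Phi_B(1,\varphi^i\omega)\bigr)\Phi_B(i,\omega)
\]
and the uniform norm bounds on $G$, shows this is dominated by $C(T,K)\,\hat\sigma_p(A,B)$, which a final shrinkage of $\delta$ renders smaller than $T\varepsilon/(6\mathrm{e}^{TK})$.

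Combining the two estimates produces $a_T(B)-a_T(A)<T\varepsilon/3$, hence $\mathscr{L}(B)\leq a_T(B)/T<\mathscr{L}(A)+2\varepsilon/3<\mathscr{L}(A)+\varepsilon$. The principal technical obstacle is the explicit factor $\mathrm{e}^{TK}$ appearing in the $G$-estimate: the quantifier order must be $T$ first (depending only on $A$ and $\varepsilon$), then $K=K(\eta,A)$ via Lemma~\ref{funcion f}, and only afterwards $\delta\ll \mathrm{e}^{-TK}/C(T,K)$. The secondary difficulty is the telescoping promotion of Lemma~\ref{contdep2} from the unit time step to time $T$; this is routine but must be executed carefully so that every intermediate norm is controlled by $\mathrm{e}^{O(TK)}$ on $G$. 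Everything else is a careful assembly of the measure-theoretic pieces already prepared in \S\ref{cdres}.
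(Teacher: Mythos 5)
Your proposal is correct and takes a genuinely different route from the paper, at least in one important respect. The $G/G^c$ decomposition, the reliance on Lemma~\ref{funcion f} to manufacture $K$ and the good set $G$, and the telescoping promotion of Lemma~\ref{contdep2} from time $1$ to time $T$ via the identity $\Phi_A(T,\omega)-\Phi_B(T,\omega)=\sum_{i=0}^{T-1}\Phi_A(T-i-1,\varphi^{i+1}\omega)\bigl(\Phi_A(1,\varphi^i\omega)-\Phi_B(1,\varphi^i\omega)\bigr)\Phi_B(i,\omega)$ all match the paper's machinery. The difference is that you work throughout with $a_T(\cdot)=\int_M\log\|\Phi_\cdot(T,\omega)\|\,d\mu$ rather than the truncation $\log^+$. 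On $G$ you substitute the elementary inequality $\log(\|\Phi_B\|/\|\Phi_A\|)\leq\|\Phi_A-\Phi_B\|/\|\Phi_A\|$ — exploiting the two-sided bound $\mathrm{e}^{-TK}\leq\|\Phi_A(T,\cdot)\|\leq\mathrm{e}^{TK}$ available on $G$ via~\eqref{eqIC3} — for the paper's Lemma~\ref{cont solu} $\log^+\|\Phi_B\|\leq\log^+\|\Phi_A\|+\|\Phi_B-\Phi_A\|$; and on $G^c$ you absorb the extra $\log^+\|\Phi_A(T,\cdot)^{-1}\|$ term symmetrically alongside $\log^+\|\Phi_B\|$. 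The payoff is substantial: you bypass entirely the paper's two-stage argument, where the estimate is first proved under the extra hypothesis $\lambda_1(A,\omega)\geq 0$ (needed there so that $\frac1T\int_M\log^+\|\Phi_A(T,\cdot)\|\to\mathscr{L}(A)$), and the general case is then recovered by replacing $A$ with $A+\alpha\,\mathrm{Id}$, verifying $\Phi_{A+\alpha\,\mathrm{Id}}=\mathrm{e}^{\alpha t}\Phi_A$, restricting to the invariant set $L_\alpha^c$, and stitching the two halves back together. Your version pays only the modest price of an extra factor $\mathrm{e}^{TK}$ on $G$, which is harmless because $\delta$ is chosen last, exactly as you note. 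Two small cautions if you were to write this out in full: the claimed bound $4T(2T+1)\eta\leq T\varepsilon/6$ does not actually hold with $\eta=\varepsilon/(32(T+2))$ once $T\geq3$ (you need $\eta$ smaller by another factor of order $T$ or a more generous allotment like $T\varepsilon/2$ as the paper uses — entirely cosmetic); and in the telescoping step you should reduce to $\hat\sigma_1$ at the outset (via Proposition~\ref{complete}(i)), since Lemma~\ref{contdep2} is most directly applied with $p=1$ when you want an $L^1$-estimate of $\|\Phi_A(T,\cdot)-\Phi_B(T,\cdot)\|$ on $G$.
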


\begin{proof}
By Proposition~\ref{complete} we have $\sigma_1(A,B)\leq\sigma_p(A,B)$, for all $1\leq p<\infty$, thus it is enough to consider $p=1$. Let $A\in\mathcal{G}^1$ and $\varepsilon>0$ be given. We assume first that for $\mu$ almost every $\w\in M$ we have
	\begin{equation}\label{hatlambda geq 0}
\lambda_{1}(A,\w)\geq 0.
\end{equation}
From the subadditive ergodic theorem, we know that the limit
$$\lambda_1(A,\w)=\lim\limits_{t\rightarrow{+\infty}}\frac{1}{t}\log\|\Phi_A(t,\w)\|$$
holds almost everywhere and also in $L^1$. Hence, using \eqref{hatlambda geq 0}, we get $$\lim\limits_{t\rightarrow{+\infty}}\frac{1}{t}\int_M \log^{-}\|\Phi_A(t,\w)\|\,d\mu(\w)=0,$$
where $f^{-}:=\min\{f,0\}$.
Notice that
\begin{eqnarray}
\mathscr{L}(A)&=&\underset{t\rightarrow{+{\infty}}}{\lim}\:\frac{1}{t}\int_{M}\log\|\Phi_A(t,\omega)\|\,d\mu(\w)\nonumber=\underset{n\rightarrow{+{\infty}}}{\lim}\:\frac{1}{n}\int_{M}\log\|\Phi_A(n,\omega)\|\,d\mu(\w)\nonumber\\
&=&\underset{n\in\N}{\inf}\,\frac{1}{n}\int_{M}\log\|\Phi_A(n,\omega)\|\,d\mu(\w)\label{exp via inf}
\end{eqnarray}
	so that it is possible to find $T\in\N$ large enough in order to have
\begin{equation*}%\label{LBA}
\frac{1}{T}\int_{M}\log^-\|\Phi_A(T,\omega)\|\,d\mu(\w)>-\frac\varepsilon8\quad\text{and}\quad\frac{1}{T}\int_{M}\log\|\Phi_A(T,\omega)\|\,d\mu(\w)<\mathscr{L}(A)+\frac\varepsilon8
\end{equation*}
and therefore, since $f=f^- + f^+$,
\begin{equation}\label{lambda A}
\frac{1}{T}\int_{M}\log^+\|\Phi_A(T,\omega)\|\,d\mu(\w)\leq\mathscr{L}(A)+\frac\varepsilon4.
\end{equation}
We apply Lemma~\ref{funcion f} to $f$ as in~\eqref{fg} and  $\eta=\varepsilon/(16(T+2))$, giving us $K$ as in the statement. Set 
$\delta'=\min\left\{\eta,\eta T \text{e}^{-K(T+2)})\right\}$ and $\delta={\delta'}/{(1+\delta')}$. Fix $B\in \mathcal{G}^1$ satisfying $\sigma_1(A,B)<\delta$, which implies $\hat\sigma_1(A,B)< \delta'\leq\eta$. We use $K$, $T$, $\eta$ and $B$ to define the sets $E_f$, $E_g$ and $G$ as in \eqref{fg2} and \eqref{defG} respectively. We are going to bound the expression \[\frac{1}{T}\int_{M}\log^{+}\|\Phi_B(T,\w)\|d\mu(\w)=\text{(I)}+\text{(II)},\]
where
\[\text{(I)}=\frac{1}{T}\int_{G^c}\log^{+}\|\Phi_B(T,\w)\|d\mu(\w)\quad\text{ and }\quad \text{(II)}=\frac{1}{T}\int_{G}\log^{+}\|\Phi_B(T,\w)\|d\mu(\w).\]
 For the first part $\text{(I)}$, and by \eqref{eqIC3} we have
\begin{eqnarray*}
\frac{1}{T}\int_{G^c}\log^{+}\|\Phi_B(T,\w)\|\,d\mu(\w)&\leq& \frac{1}{T}\int_{G^c}\int_0^T\|B(\varphi^s(\w))\|\,ds\,d\mu(\w)\leq \frac{1}{T}\int_{G^c} \sum_{i=0}^{T-1} g(\varphi^i(\w))\,d\mu(\w)\\
&=& \frac{1}{T}\sum_{i=0}^{T-1} \int_{G^c}  g(\varphi^i(\w))\,d\mu(\w)= \frac{1}{T}\sum_{i=0}^{T-1} \int_{\varphi^i(G^c)}  g(\w)\,d\mu(\w).
\end{eqnarray*}
For each $i=0,\dots,T-1$ we have, by \eqref{funcion f2} and relation \eqref{mu(G)},
$$\int_{\varphi^{i}(G^{c})}\!\!g\: d\mu(\w)\leq\int_{E^{c}_{g}}\!\!g\: d\mu(\w) + \int_{E_{g}\cap\varphi^{i}(G^{c})}\!\!g\: d\mu(\w)<2\eta + K\mu(E_{g}\cap\varphi^{i}(G^{c}))\le\frac\varepsilon{8} + K\mu(G^{c})\leq\frac\varepsilon2.$$
Putting all together we get
\begin{equation}\label{six eta}
\text{(I)}=\frac{1}{T}\int_{G^c}\log^{+}\|\Phi_B(T,\w)\|d\mu(\w)\leq \frac\varepsilon2.
\end{equation}
Next we estimate the second part $\text{(II)}$. Using Lemma \ref{cont solu} and \eqref{lambda A} we have
\begin{align}%\label{next lambda A}
\text{(II)}&\displaystyle\leq\frac{1}{T}\int_{G}\log^+\|\Phi_A(T,\w)\|d\mu(\w) + \frac{1}{T}\int_{G}\|\Phi_B(T,\omega)-\Phi_A(T,\omega)\|d\mu(\w)\nonumber\\&\displaystyle\leq\mathscr{L}(A) + \frac\varepsilon4 + \frac{1}{T}\int_{G}\|\Phi_B(T,\omega)-\Phi_A(T,\omega)\|d\mu(\w).\label{LB}
\end{align}
To estimate the integral on the right hand side, we proceed as follows. Using the cocycle properties and Lemma~\ref{leminha} we have for all $\w\in G$ and  $i=1,\dots,T-1$:
\begin{align*}
\|\Phi_B(i+1,\omega)-\Phi_A(i+1,\omega)\|&\leq\|\Phi_B(1,\varphi^{i}(\omega))\Phi_B(i,\w) - \Phi_A(1,\varphi^{i}(\omega))\Phi_A(i,\w)\|\\
&\leq\|\Phi_B(1,\varphi^{i}(\w))\|\|\Phi_B(i,\omega)-\Phi_A(i,\omega)\| + \\
&\,\,\,+\|\Phi_B(1,\varphi^{i}(\w))-\Phi_A(1,\varphi^{i}(\w))\|\|\Phi_A(i,\w)\|\\&\leq \text{e}^{K}\|\Phi_B(i,\omega)-\Phi_A(i,\omega)\|+\text{e}^{Ki}\|\Phi_B(1,\varphi^{i}(\w))-\Phi_A(1,\varphi^{i}(\w))\|.
\end{align*}
Integrating over $G$ we get by Lemma~\ref{contdep2} with $p=1$ that
$$\int_{G}\|\Phi_B(i+1,\omega)-\Phi_A(i+1,\omega)\|d\mu(\w)\leq \text{e}^{K}\int_{G}\|\Phi_B(i,\omega)-\Phi_A(i,\omega)\|d\mu(\w)+ \text{e}^{Ki+2K}\delta'$$
By induction, we obtain for all $i=1,\dots,T$
$$\int_{G}\|\Phi_B(i,\omega)-\Phi_A(i,\omega)\|d\mu(\w)\leq (i+2)\text{e}^{K(i+2)}\delta'.$$
In particular if we take $i=T$ we get
\begin{equation}\label{LB1}
\int_{G}\|\Phi_B(T,\w)-\Phi_A(T,\w)\|d\mu(\w)\leq (T+2)\text{e}^{K(T+2)}\delta'\leq \frac\varepsilon4 T.
\end{equation}
Finally, from~\eqref{LB} and~\eqref{LB1} we get
\begin{equation}\label{LB2}
\text{(II)}=\frac{1}{T}\int_{G}\log^{+}\|\Phi_A(T,\w)\|d\mu(\w)\leq\mathscr{L}(A) + \frac\varepsilon2.
\end{equation}
To complete we consider \eqref{six eta} and \eqref{LB2} to conclude that
\begin{align*}
\mathscr{L}(B)&\leq\frac{1}{T}\int_{M}\log\|\Phi_B(T,\w)\|d\mu(\w)\\&=\frac{1}{T}\int_{G^c}\log^{+}\|\Phi_B(T,\w)\|d\mu(\w) + \frac{1}{T}\int_{G}\log^{+}\|\Phi_B(T,\w)\|d\mu(\w)\\&\leq\frac\varepsilon2+\left(\mathscr{L}(A) + \frac\varepsilon2\right)+ =\mathscr{L}(A) + \varepsilon.
\end{align*}
Let us prove now the general case. Again, let $A\in\mathcal{G}^1$ and $\varepsilon>0$ be given. For $\alpha>0$ we define the $\varphi^{t}$-invariant set 
$$L_\alpha=\{\omega\in M: \lambda_{1}(A,\w)<-\alpha\}.$$ Consider $\alpha$ large enough such that
		
\begin{equation}\label{gen case 1}
\int_{L_\alpha}\log^+\|\Phi_A(1,\omega)\|\,d\mu(\w) <\frac\varepsilon8\,\,\,\, \text{and} \,\,\,\, \int_{L_\alpha}\lambda_{1}(A,\w)\,d\mu(\w)>-\frac\varepsilon4.
\end{equation}
In particular we get
\begin{equation}\label{gen case trocas}
\int_{L_\alpha^c}\lambda_{1}(A,\w)\,d\mu(\w)<\mathscr{L}(A)+\frac\varepsilon4.
\end{equation}

Consider the (non kinetic) infinitesimal generator defined by $A+\alpha Id$. We claim that the weak solution of  \eqref{eq:LDS} considering the generator $A+\alpha Id$ is given by $\Phi_{A+\alpha Id}=\text{e}^{\alpha t}\Phi_A(t,\w)$. Indeed, it suffices to prove that 
\begin{equation}\label{dddd}
\text{e}^{\alpha t}\Phi_A(t,\w)=\text{Id}+\int_{0}^{t}(A+\alpha Id)(\varphi^{s}(\omega))\text{e}^{\alpha s}\Phi_A(s,\w)ds
\end{equation}
that is
$$\text{e}^{\alpha t}\Phi_A(t,\w)=\text{Id}+\int_{0}^{t}\text{e}^{\alpha s}A(\varphi^{s}(\omega))\Phi_A(s,\w)ds+\alpha\int_{0}^{t}\text{e}^{\alpha s}\Phi_A(s,\w)ds.$$ 
Using integrating by parts $u=\text{e}^{\alpha s}$, $dv=A(\varphi^{s}(\omega))\Phi_A(s,\w)\,ds$, $du=\alpha \text{e}^{\alpha t}\,ds$ and $v=\Phi_A(s,\w)$ we get 
$$\text{e}^{\alpha t}\Phi_A(t,\w)=\text{Id}+\text{e}^{\alpha t}\Phi_A(t,\w)-\text{Id}-\int_{0}^{t}\alpha \text{e}^{\alpha s}\Phi_A(s,\w)\,ds+\alpha\int_{0}^{t}\text{e}^{\alpha s}\Phi_A(s,\w)\,ds,$$
and the claim \eqref{dddd} proved. Now, we define its `maximal Lyapunov exponent' by 
$$\lambda_{1}(A+\alpha Id,\w):=\lim\limits_{t\to\infty}\frac{1}{t}\log^+\|\text{e}^{\alpha t}\Phi_A(t,\omega)\|.$$
Notice that $\lambda_{1}(A+\alpha Id,\w)=\lambda_{1}(A,\w)+\alpha$ and if $\w\in L^{c}_{\alpha}$ we have $\lambda_{1}(A+\alpha Id,\w)\geq0$.
Define now

\begin{equation*}
\begin{array}{rrcl}
\tilde{\mathscr{L}}\colon &(\mathcal{G}^1\,,\sigma_p)&\longrightarrow & \mathbb{R} \\& A & \longmapsto &  \int_{M}\lambda_{1}(A+\alpha Id,\w)d\mu(\w).
\end{array}
\end{equation*}
Proceeding similarly to the previous computations for $\mathscr{L}$, we have that $\tilde{\mathscr{L}}$ is upper semicontinuous if we restrict $A+\alpha Id$ to $L_\alpha^c$. For this, notice also that Lemma~\ref{contdep2} also holds if we replace $\Phi_A$ and $\Phi_B$ by the corresponding $\text{e}^\alpha\Phi_A$ and $\text{e}^{\alpha}\Phi_B$. Hence there is $\delta>0$ such that if $\sigma_p((A+\alpha\text{Id})\vert_{L_\alpha^c}, (B+\alpha\text{Id})\vert_{L_\alpha^c})<\delta$ we have
\begin{align*}
\int_{L_\alpha^c}\lambda_{1}(B+\alpha Id,\w)d\mu(\w)&\leq\int_{L_\alpha^c}\lambda_{1}(A+\alpha Id,\w)d\mu(\w)+\frac\varepsilon4,
\end{align*}
that is,
\begin{equation}\label{gen case 2}
\int_{L_\alpha^c}\lambda_{1}(B,\w)d\mu(\w)\leq\int_{L_\alpha^c}\lambda_{1}(A,\w)d\mu(\w)+\frac\varepsilon4.
\end{equation}

On the other hand, since $L_{\alpha}$ is invariant,  we have
\begin{equation}\label{lbd log+}
\int_{L_a}\lambda_{1}(\Phi_B,\w)\,d\mu(\w)= \inf_n\frac1n\int_{L_a}\log^+\|\Phi_B(n,\omega)\|\,d\mu(\w)
\leq\int_{L_a}\log^+\|\Phi_B(1,\omega)\|\,d\mu(\w)
\end{equation}
Consider $T=1$ and $\eta$, $K$ and $G$  as before, and replace $\eta$ by $\eta'=\eta/4$.  From Lemma \ref{cont solu},~\eqref{gen case 1} and \eqref{LB1} we get
\begin{eqnarray}
\int_{L_\alpha\cap G} \log^+\|\Phi_B(1,\omega)\|\,d\mu(\w) &\leq&\displaystyle  \int_{L_\alpha\cap G} \log^+\|\Phi_A(1,\omega)\|\,d\mu(\w)+\int_{L_\alpha\cap G} \|\Phi_B(1,\omega)-\Phi_A(1,\omega)\|\,d\mu(\w)\nonumber
\\
 &\leq&  \frac\varepsilon{16}+\frac\varepsilon{16}\nonumber\\
 &=&\frac\varepsilon8.\label{laG}
\end{eqnarray}
Similarly as to~\eqref{six eta}, we obtain
\begin{eqnarray*}
\int_{L_\alpha\cap G^c} \log^+\|\Phi_B(1,\omega)\|\,d\mu(\w) &\leq \frac\varepsilon8,
 \end{eqnarray*}
which together with~\eqref{laG} leads to
 \begin{eqnarray}\label{laB}
\int_{L_\alpha} \log^+\|\Phi_B(1,\omega)\|\,d\mu(\w) &\leq \frac\varepsilon4,
 \end{eqnarray}
 
Finally, from~\eqref{gen case 2},~\eqref{lbd log+},~\eqref{gen case trocas}  and~\eqref{laB} we have
\begin{align*}
\mathscr{L}(B)&=\int_{L_\alpha}\lambda_1(B,\w)d\mu(\w)+\int_{L_\alpha^c}\lambda_1(B,\w)d\mu(\w)
\\
&\leq\int_{L_\alpha}\lambda_1(A,\w)d\mu(\w)+\frac\varepsilon4+\int_{L_\alpha}\log^+\|\Phi_B(T,\w)\|d\mu(\w)
\\&\leq\left(\mathscr{L}(A) + \frac\varepsilon2\right)+ \frac\varepsilon4 + \frac\varepsilon4=\mathscr{L}(A) + \varepsilon.
\end{align*} 

\end{proof}
	
\medskip
		
	\section{Proof of Theorem~\ref{ops2}}\label{PT1}

	The strategy applied in $C^0$ cocycles endowed with the $C^0$ norm (or essential bounded cocycles endowed with the $L^\infty$ norm) developed in \cite{B, BV2, Be, Be2} to diminish Lyapunov exponents cannot be used in our context. Indeed, $L^p$ norms catch information on a neighborhood of a segment of orbit and, contrary to the $C^0$ norm, not from the segment itself. For this reason we must follow a different approach.
	
We recall that since we are assuming that $\varphi^t$ is ergodic, the Lyapunov exponents of a given
$A\in\mathcal K^1$   are constant $\mu$ almost everywhere and referred as $\lambda_1(A)\geq\lambda_2(A)$. We define the \emph{jump map} by:

\begin{equation*}
 \begin{array}{crcl}
\mathscr{J}\colon &(\mathcal{K}^1\,,\sigma_p) & \longrightarrow & \mathbb{R} \\& A & \longmapsto &\frac{\lambda_{1}(A)-\lambda_{2}(A)}{2}.
\end{array}
\end{equation*}
	
\bigskip

Next result is a straightforward consequence of Proposition~\ref{MProp} and is crucial to obtain the proof of Theorem~\ref{ops2}.
	
\begin{lemma}\label{basics}
Consider $1\leq p<\infty$ and let $A\in\mathcal{K}^1$ and $\varepsilon,\delta>0$ be given. There exists $B\in \mathcal{K}^1$ with $\sigma_p(A,B)<\varepsilon$ such that
 \begin{equation*}%\label{jumps}
\mathscr{L}(B)<\delta-\mathscr{J}(A)+\mathscr{L}(A).
 \end{equation*}
\end{lemma}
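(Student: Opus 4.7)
The first move is purely algebraic: rewrite the target
\[
\delta-\mathscr{J}(A)+\mathscr{L}(A)=\delta+\frac{\lambda_{1}(A)+\lambda_{2}(A)}{2},
\]
so the desired inequality becomes $\lambda_{1}(B)<\delta+(\lambda_{1}(A)+\lambda_{2}(A))/2$. This is precisely the content of Proposition~\ref{MProp} up to two cosmetic issues: the latter gives a non-strict inequality, and it controls the perturbation only in $\sigma_{1}$, whereas here we need $\sigma_{p}$-closeness. Both issues I plan to dispatch by running the construction of Proposition~\ref{MProp} with $\delta/2$ in place of $\delta$ and refining the set of return points on which the perturbation is localised.

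The substantive step is the $\sigma_{p}$-upgrade. I will re-inspect the construction used in the proof of Proposition~\ref{MProp}: the approximating $B$ is obtained as a global perturbation of $A$ by the constant rotational generator $P$ of Lemma~\ref{rot4cont}, supported on a flow-box $\varphi^{[N/2,N/2+1]}(\tilde\Sigma)$ with $\tilde\mu(\tilde\Sigma)>0$, and satisfying the pointwise bound $\|B(\hat\w)\|\le 4\pi^{2}$ on that box by item (i) of Lemma~\ref{rot4cont}. To turn an $L^{1}$-estimate on $\|A-B\|$ into an $L^{p}$ one, I need a pointwise bound for $\|A\|$ on the flow-box, which $A\in\mathcal{K}^{1}$ does not a priori provide. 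I will obtain such a bound by truncation: for $C>0$ set
\[
\tilde\Sigma_{C}=\Bigl\{\w\in\tilde\Sigma:\ \sup_{s\in[N/2,N/2+1]}\|A(\varphi^{s}(\w))\|\le C\Bigr\},
\]
and choose $C$ so large that $\tilde\mu(\tilde\Sigma_{C})>0$; this is possible because $\|A(\varphi^{s}(\cdot))\|$ is finite $\mu$-almost everywhere on the (compact $s$-range) flow-box of $\tilde\Sigma$. Passing to a measurable $\Sigma_{0}\subseteq\tilde\Sigma_{C}$ we get
\[
\hat\sigma_{p}(A,B)^{p}\le\int_{\varphi^{[N/2,N/2+1]}(\Sigma_{0})}(\|A\|+\|B\|)^{p}\,d\mu\le(C+4\pi^{2})^{p}\,\tilde\mu(\Sigma_{0}),
\]
so that choosing $\tilde\mu(\Sigma_{0})$ small enough forces $\hat\sigma_{p}(A,B)<\varepsilon/(1-\varepsilon)$, hence $\sigma_{p}(A,B)<\varepsilon$.

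What remains is to check that this additional shrinking does not spoil the Lyapunov-exponent estimate of Proposition~\ref{MProp}. This is automatic: the argument there requires only that the base set on which the Oseledets approximations \eqref{eta}--\eqref{eta2} hold has positive $\tilde\mu$-measure, and by Oseledets' theorem these estimates remain valid on $\Sigma_{0}\subseteq\tilde\Sigma_{C}\subseteq\tilde\Sigma$ up to a $\tilde\mu$-null set. Ergodicity of $\varphi^{t}$ then promotes the almost-everywhere bound on $\lambda_{1}(B,\w)$ obtained on $\Sigma_{0}$ to the global value $\lambda_{1}(B)$. Applying the proposition with $\delta/2$ therefore yields $\mathscr{L}(B)=\lambda_{1}(B)\le(\lambda_{1}(A)+\lambda_{2}(A))/2+\delta/2<\delta+(\lambda_{1}(A)+\lambda_{2}(A))/2=\delta-\mathscr{J}(A)+\mathscr{L}(A)$.

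The only point where care is needed is the truncation set $\tilde\Sigma_{C}$: one should verify that the supremum over $s\in[N/2,N/2+1]$ is jointly measurable and that its sub-level sets exhaust $\tilde\Sigma$ as $C\to\infty$. Both follow from the $L^{1}$-integrability of $s\mapsto\|A(\varphi^{s}(\w))\|$ on the compact segment (the Fubini-type argument already exploited in the derivation of \eqref{eqIC3}), so I expect no substantive obstacle beyond bookkeeping.
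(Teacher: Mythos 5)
You have correctly put your finger on the issue: Proposition~\ref{MProp} delivers only $\sigma_1(A,B)<\varepsilon$, and Proposition~\ref{complete}(i) runs in the wrong direction ($\sigma_1\leq\sigma_p$, so $\sigma_1$-smallness does not give $\sigma_p$-smallness), so the $\sigma_p$-upgrade must genuinely be argued from inside the Proposition~\ref{MProp} machinery rather than read off from the stated conclusion. The algebraic reformulation $\delta-\mathscr{J}(A)+\mathscr{L}(A)=\delta+\tfrac12(\lambda_1(A)+\lambda_2(A))$ and the ergodicity step are fine and match the paper.

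The step you call ``bookkeeping'' is, however, where the plan fails for $p>1$. You define $\tilde\Sigma_C=\{\w\in\tilde\Sigma:\sup_{s\in[N/2,N/2+1]}\|A(\varphi^s(\w))\|\leq C\}$ and assert $\tilde\mu(\tilde\Sigma_C)>0$ for large $C$ on the grounds that $\|A(\varphi^s(\w))\|$ is finite a.e.\ on the flow-box and that $s\mapsto\|A(\varphi^s(\w))\|$ is $L^1$ on $[N/2,N/2+1]$. Neither fact gives what you need: finiteness a.e.\ in $(\w,s)$ says nothing about the supremum in $s$, and $L^1$-integrability in $s$ implies neither boundedness nor the $L^p$-integrability in $s$ that your $\hat\sigma_p$ estimate actually rests on. Since $\mathcal{K}^1$ only controls the generator in $L^1(\mu)$, there are $A\in\mathcal{K}^1$ for which the plan collapses: take $\alpha\equiv 0$ and, in special-flow coordinates $(\w,u)$, $\beta(\w,u)=f(\w)g(u)$ with $f\in L^1(\tilde\mu)$ and $g(u)=\sum_n 2^{-n}|u-q_n|^{-1/p}$ over a dense sequence $(q_n)$; then $g\in L^1$ but $\int_I g^p=\infty$ for every interval $I$, hence $\int_{N/2}^{N/2+1}\|A(\varphi^s(\w))\|^p\,ds=\infty$ for $\tilde\mu$-a.e.\ $\w$. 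For such $A$ every $\tilde\Sigma_C$ is $\tilde\mu$-null, and because your $B$ equals the bounded $P$ on the flow-box and $A$ elsewhere, $\hat\sigma_p(A,B)=\infty$ and so $\sigma_p(A,B)=1$ for every choice of $\Sigma_0$. So no amount of shrinking the base set rescues the overwrite-by-$P$ construction; a repair would require a perturbation of the form $B=A+H$ with $H$ pointwise bounded on the flow-box (a substantially different argument, since then $\Phi_B$ on the flow-box is no longer the explicit rotation $\Phi_P$ of Lemma~\ref{rot4cont}), not the truncation of the base that you propose.
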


\begin{proof}
From (i) in Proposition~\ref{complete} and Proposition~\ref{MProp} there is $B\in \mathcal{K}^1$ with $\sigma_1(A,B)\leq \sigma_p(A,B)<\varepsilon$ such that
 \begin{equation*}
 \mathscr{L}(B)<\frac{\lambda_{1}(A)+\lambda_{2}(A)}{2}+\delta=\delta-\mathscr{J}(A)+\mathscr{L}(A).
 \end{equation*}
\end{proof}
	
\begin{theorem}\label{contiunity points}
 Consider $1\leq p<\infty$ and the complete metric space $(\mathcal{K}^1,\sigma_p)$. If $A\in\mathcal{K}^1$ is a continuity point of $\mathscr{L}$, then $\mathscr{J}(A)=0$.
\end{theorem}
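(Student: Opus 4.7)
The plan is to argue by contradiction, using Lemma~\ref{basics} to produce $\sigma_p$-perturbations of $A$ on which $\mathscr{L}$ drops by a definite amount, thereby contradicting continuity of $\mathscr{L}$ at $A$. There is essentially no new analytic content: the heavy lifting (producing a perturbation that strictly lowers the top exponent) has already been done in Proposition~\ref{MProp}, and the continuity framework (upper semicontinuity of $\mathscr{L}$ in the $\sigma_p$-topology) is Proposition~\ref{upper sc}. Theorem~\ref{contiunity points} is the short bridge between them.

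Concretely, suppose towards a contradiction that $A\in\mathcal{K}^1$ is a continuity point of $\mathscr{L}$ but that $\mathscr{J}(A)>0$; equivalently $\lambda_1(A)>\lambda_2(A)$, so $A$ has simple Lyapunov spectrum. Set
\[
\delta=\tfrac{1}{2}\mathscr{J}(A)>0.
\]
By continuity of $\mathscr{L}$ at $A$ (in particular its lower semicontinuity), there exists $\varepsilon_0>0$ such that $\sigma_p(A,B)<\varepsilon_0$ implies $\mathscr{L}(B)>\mathscr{L}(A)-\delta$. I would then invoke Lemma~\ref{basics} with this $\varepsilon_0$ and with $\delta$ as above to produce $B\in\mathcal{K}^1$ satisfying $\sigma_p(A,B)<\varepsilon_0$ and
\[
\mathscr{L}(B) < \delta-\mathscr{J}(A)+\mathscr{L}(A)=\mathscr{L}(A)-\tfrac{1}{2}\mathscr{J}(A)=\mathscr{L}(A)-\delta,
\]
which directly contradicts the choice of $\varepsilon_0$. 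Hence $\mathscr{J}(A)=0$.

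I would take care to note that in the ergodic setting assumed throughout \S\ref{continuous results}, the functional $\mathscr{L}(A)=\int_M\lambda_1(A,\omega)\,d\mu(\omega)$ coincides with the constant value $\lambda_1(A)$, so that the conclusion $\mathscr{J}(A)=0$ means exactly $\lambda_1(A)=\lambda_2(A)$, i.e.\ trivial Lyapunov spectrum. The only subtlety worth double-checking in writing it out is the direction of the inequality in Lemma~\ref{basics}: the lemma provides a $\sigma_p$-close perturbation whose top exponent is \emph{strictly smaller} than $\mathscr{L}(A)$ by approximately $\mathscr{J}(A)$, and this is exactly what a continuity point cannot tolerate.

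The main (and in fact only) obstacle would be if Lemma~\ref{basics} were not available for the $\sigma_p$ metric but merely for $\sigma_1$; happily, Proposition~\ref{complete}(i) guarantees $\sigma_1\leq\sigma_p$, so the perturbation furnished by Proposition~\ref{MProp} (which controls $\sigma_1$) is automatically $\sigma_p$-small. With that observation in hand, the proof of Theorem~\ref{contiunity points} reduces to the two-line contradiction sketched above, which is the standard Ma\~n\'e-Bochi mechanism transported into the $L^p$-setting for kinetic cocycles.
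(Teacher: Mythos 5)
Your proof is correct and uses the same mechanism as the paper: apply Lemma~\ref{basics} at a continuity point to produce a $\sigma_p$-small perturbation whose top exponent drops by roughly $\mathscr{J}(A)$, contradicting (lower semi)continuity unless $\mathscr{J}(A)=0$. The only cosmetic difference is that you set it up as a direct contradiction with a fixed $\delta=\tfrac12\mathscr{J}(A)$, whereas the paper phrases it via a sequence $A_n\to A$ and $B_n$ with $\sigma_p(A_n,B_n)\to 0$; your version is arguably cleaner and avoids the paper's slightly murky passage to ``$\mathscr{J}(A_n)=0$ for $n$ large''.
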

	
\begin{proof}
 We take a sequence of kinetic linear differential systems $A_n\in\mathcal{K}^1$ converging to $A\in\mathcal{K}^1$ in the $\sigma_p$-sense. Since $A$ is a continuity point we must have $\lim\mathscr{L}(A_n)=\mathscr{L}(A)$.
 By Lemma~\ref{basics}, given $\varepsilon_n\rightarrow 0$ and $\delta>0$, there exists $B_n\in\mathcal{K}^1$, with $\sigma_p(A_n,B_n)<\varepsilon_n$, such that
		
$$\mathscr{L}(B_n)<\delta-\mathscr{J}(A_n)+\mathscr{L}(A_n).$$
Considering limits on $n$ we get
		
$$\underset{n\rightarrow\infty}{\lim}\mathscr{L}(B_n)<\delta-\underset{n\rightarrow\infty}{\lim}\mathscr{J}(A_n)+\mathscr{L}(A).$$
		Since $A$ is a continuity point of $\mathscr{L}$ we obtain that $\mathscr{J}(A_n)=0$ for all $n$ sufficiently large and thus $\mathscr{J}(A)=0$.
\end{proof}

We are now in codition to finish the proof of our main result.	
	\begin{proof}(of Theorem~\ref{ops2})
By Proposition~\ref{upper sc} the function $\mathscr{L}$ is upper semicontinuous and by Theorem~\ref{contiunity points} the continuity points of $\mathscr{L}$ have trivial spectrum (\emph{jump} equal to zero). It is well-known that the set of points of continuity of an upper semicontinuous function is a residual subset (see \cite{K}). Thus, there exists an $\sigma_p$-residual subset $\mathcal{R}\subset\mathcal{K}^1$ such that if $A\subset\mathcal{R}$, then $\mathscr{J}(A)=0$, that is $\lambda_1(A)=\lambda_2(A)$.
\end{proof}

From Corollary \ref{complete 2} we have that $(\mathcal{K}^1,\sigma_p)$ is a Baire space, hence a $\sigma_p$-residual is $\sigma_p$-dense. Therefore, we obtain a $\sigma_p$-prevalence of trivial Lyapunov spectrum among kinetic systems.
	
\vspace{1cm}

\textbf{Acknowledgements:} The authors were partially supported by FCT - `Funda\c{c}\~ao para a Ci\^encia e a Tecnologia', through Centro de Matem\'atica e Aplica\c{c}\~oes (CMA-UBI), Universidade da Beira Interior, project UID/00212/2020. MB also like to thank CMUP for providing the necessary conditions in which this work was developed.

\bigskip

\vspace{0.4cm}

\end{document}